\theoremstyle{plain}
\newtheorem{corollary}{Corollary}
\newtheorem{proposition}{Proposition}
\newtheorem{lemma}{Lemma}
\theoremstyle{definition}
\newtheorem{remark}{Remark}
\newcommand{\enm}[1]{\ensuremath{#1}}          %
\newcommand{\cal}[1]{\mathcal{#1}}
\newcommand{\Nm}{\mathrm{Num}}
\newcommand{\CC}{\enm{\mathbb{C}}}
\newcommand{\II}{\enm{\mathbb{I}}}
\newcommand{\NN}{\enm{\mathbb{N}}}
\newcommand{\RR}{\enm{\mathbb{R}}}
\newcommand{\ZZ}{\enm{\mathbb{Z}}}
\newcommand{\FF}{\enm{\mathbb{F}}}
\newcommand{\PP}{\enm{\mathbb{P}}}
\newcommand{\Bb}{\enm{\cal{B}}}
\newcommand{\Cc}{\enm{\cal{C}}}
\renewcommand{\phi}{\varphi}
\renewcommand{\theta}{\vartheta}
\renewcommand{\epsilon}{\varepsilon}
\renewcommand{\to}[1][]{\xrightarrow{\ #1\ }}
\newcommand{\old}[1]{}
\date{}
\begin{document}

\title[numerical ranges]
{The Hermitian null-range of a matrix over a finite field}
\author{E. Ballico}
\address{Dept. of Mathematics\\
 University of Trento\\
3123 Povo (TN), Italy}
\email{ballico@science.unitn.it}
\thanks{The author was partially supported by MIUR and GNSAGA of INdAM (Italy).}
\subjclass[2010]{12E20; 15A33; 15A60}
\keywords{numerical range; finite field; $2\times 2$-matrix; Hermitian variety over finite field}

\begin{abstract}
Let $q$ be a prime power. For $u=(u_1,\dots ,u_n), v=(v_1,\dots ,v_n)\in \FF _{q^2}^n$ let $\langle u,v\rangle := \sum _{i=1}^{n} u_i^qv_i$ be the Hermitian form of $\FF _{q^2}^n$. Fix an $n\times n$ matrix $M$ over $\FF _{q^2}$.
We study the case $k=0$ of the set $\Nm _k(M):= \{\langle u,Mu\rangle \mid u\in \FF _{q^2}, \langle u,u\rangle  =k\}$. When $M$ has coefficients in $\FF _q$ we study the set $\Nm _0(M)_q:= \{\langle u,Mu\rangle \mid u\in \FF _q^n\}\subseteq \FF _q$. The set $\Nm _1(M)$ is the numerical range of $M$, previously introduced
in a paper by Coons, Jenkins, Knowles, Luke and Rault (case $q$ a prime $p\equiv 3\pmod{4}$) and by myself (arbitrary $q$). We study in details $\Nm _0(M)$ and $\Nm _0(M)_q$ when $n=2$.
If $q$ is even, $\Nm _0(M)_q$ is easily described for arbitrary $n$.
\end{abstract}

\maketitle

\section{Introduction}
Fix a prime $p$ and a power $q$ of $p$. Up to field isomorphisms there is a unique field $\FF _q$ such that $\sharp (\FF _q) =q$ (\cite[Theorem 2.5]{ln}).
Let $e_1,\dots ,e_n$ be the standard basis of $\FF _{q^2}^n$. For all $v,w\in \FF _{q^2}^n$, say $v = a_1e_1+\cdots +a_ne_n$ and $w =b_1e_1+\cdots +b_ne_n$, set $\langle v,w\rangle
= \sum _{i=1}^{n} a_i^qb_i$. $\langle \ ,\ \rangle$ is the standard Hermitian form of  $\FF_{q^2}^n$. The set $\{u\in \FF _{q^2}^n\mid \langle u,u\rangle =1\}$ is an affine chart of the Hermitian variety
of $\PP^n(\FF _{q^2})$ (\cite[Ch. 5]{h}, \cite[Ch. 23]{ht}). Let $M$ be an $n\times n$ matrix with coefficients in $\FF _{q^2}$. In \cite{b} we made the following definition.
The \emph{numerical range} $\Nm (M)$ (or $\Nm _1(M)$) of $M$ is the set of
all $\langle u,Mu\rangle$ with $\langle u,u\rangle =1$. $\CC $ is a degree $2$ Galois extension of $\RR$ with the complex conjugation as the generator of the Galois group.
$\FF _{q^2}$ is a degree $2$ Galois extension of $\FF _q$ with the map $t\mapsto t^q$ as a generator of the Galois group. Hence $\langle \ ,\ \rangle$ is the Hermitian form
associated to this Galois extension. Thus the definition of $\Nm (M)$ is a natural extension of the notion of  numerical range in linear algebra (\cite{gr}, \cite{hj}, \cite{hj1}, \cite{pt}). This extension was introduced in \cite{cjklr} when $q$
is a prime $p\equiv 3\pmod{4}$. In this paper we consider related subsets $\Nm '_0(M)\subseteq \Nm _0(M)\subseteq \FF _{q^2}$.

As in \cite{cjklr} for any $k\in \FF _q$ set $C_n(k):= \{(a_1,\dots ,a_n)\in \FF _{q^2}^n\mid \sum _{i=1}^{n} a_i^{q+1} =k\}$. The set $C_n(0)$ is a cone of $\FF _{q^2}^n$ and
its proiectivization $\Cc _n\subset \PP^{n-1}(\FF _{q^2})$ is the Hermitian variety of dimension $n-2$ of $\PP^{n-1}(\FF _{q^2})$ with rank $n$. Set $C'_n(0): =C_n(0)\setminus \{0\}$. Recall that
$\langle u, u\rangle \in \FF _q$ for all $u\in \FF _{q^2}^{n}$. 
For any $n\times n$ matrix over $\FF _{q^2}$ and any $k\in \FF _q$ let $\Nm _k(M)$ (resp $\Nm '_0(M)$) be the set of all $a\in \FF_{q^2}$ such that there is $u\in C_n(k)$  (resp. $u\in C'_n(0)$ and $n\ge 2$)
with $a = \langle u,Mu\rangle$. We always have $0\in \Nm _0(M)$, $\Nm _0(M) =\Nm '_0(M)\cup \{0\}$ and quite often, but not always, we have $0\in \Nm '_0(M)$ (Propositions \ref{ab3}, \ref{ab4}, \ref{ab5}). For
instance, we have $\Nm '_0(\II _{n\times n})=\{0\}$ for all $n\ge 2$. If $n=1$, i.e. $M$ is the multiplication by a scalar $m$, we have $\Nm _k(M) = mk$. There is an ambiguity if $n=1$, because $C'_1(0) =\emptyset$. Hence we do not define $\Nm '_0$ for $1\times 1$ matrices. We say that $\Nm '_0(M)$ is the {\emph{Hermitian null-range}} of the matrix $M$.

We have $\Nm _k(M) = k\Nm _1(M)$ for all $k\in \FF _q^\ast$ (use Remark \ref{+01} to adapt the proof \cite[Lemma 2.3]{cjklr}). Thus we know all numerical ranges of $M$ if we know $\Nm _1(M)$ and $\Nm '_0(M)$. The first part of this paper
studies $\Nm '_0(M)$. If $n=2$ we prove several results concerning the set $\Nm '_0(M)$ under different assumptions on the eigenvalues and the eigenvectors of $M$. As a byproduct of our study of the case $n=2$ we get the following result.

\begin{corollary}\label{ab6}
Assume that $M\ne c\II _{n\times n}$ for some $c$. Then $\sharp (\Nm _0(M)) \ge \lceil (q+1)/2\rceil$.
\end{corollary}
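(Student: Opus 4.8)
The plan is to reduce the assertion to the case $n=2$ treated in the first part of the paper, and then to manufacture enough values by parametrising the isotropic cone $C'_2(0)$.

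First I would use that the standard basis $e_1,\dots,e_n$ is orthonormal, since $\langle e_i,e_j\rangle=\delta_{ij}$. Fix indices $i\ne j$ and set $N=\begin{pmatrix} m_{ii} & m_{ij}\\ m_{ji}& m_{jj}\end{pmatrix}$. For $u=xe_i+ye_j$ in the coordinate plane spanned by $e_i,e_j$ one computes $\langle u,u\rangle=x^{q+1}+y^{q+1}$ and $\langle u,Mu\rangle=\langle(x,y),N(x,y)\rangle$; hence the isotropic vectors supported on $\{e_i,e_j\}$ are exactly those arising from $C_2(0)$, and $\Nm '_0(N)\subseteq\Nm '_0(M)$, so $\Nm _0(N)\subseteq\Nm _0(M)$. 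Since $M\ne c\II _{n\times n}$ (which forces $n\ge 2$), either $m_{ij}\ne0$ for some $i\ne j$, or $M$ is diagonal with two distinct diagonal entries; in both cases a suitable principal $2\times2$ submatrix $N$ is non-scalar. It therefore suffices to prove $\sharp(\Nm _0(N))\ge\lceil(q+1)/2\rceil$ for a non-scalar $2\times2$ matrix $N$.

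Write $a=m_{ii}$, $d=m_{jj}$, $b=m_{ij}$, $c=m_{ji}$. Every $u\in C'_2(0)$ has nonzero second coordinate (otherwise $u=0$), so after scaling $u=y(t,1)$ with $t^{q+1}=-1$; the set $S=\{t\in\FF _{q^2}\mid t^{q+1}=-1\}$ has $q+1$ elements. A direct computation gives $\langle u,Nu\rangle=y^{q+1}f(t)$ with $f(t)=(d-a)+ct+bt^q$. Since the norm $y\mapsto y^{q+1}$ is onto $\FF _q^\ast$, taking $y=1$ already yields $f(S)\subseteq\Nm '_0(N)\subseteq\Nm _0(N)$, so it is enough to bound $\sharp f(S)$ from below.

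The crux is the counting of $f(S)$. On $S$ one has $t^q=-t^{-1}$, so for $t,t'\in S$
\[ f(t)-f(t')=(t-t')\Bigl(c+\frac{b}{tt'}\Bigr). \]
If $c\ne0$ this vanishes only for $t=t'$ or $tt'=-b/c$, so $f$ is at most two-to-one on $S$ and $\sharp f(S)\ge\lceil(q+1)/2\rceil$; if $c=0$ but $b\ne0$ then $f$ is injective because Frobenius is, giving $\sharp f(S)=q+1$. This disposes of every non-scalar $N$ with $(b,c)\ne(0,0)$. In the remaining diagonal case $b=c=0$, $a\ne d$, the function $f\equiv d-a$ is a nonzero constant and $\Nm _0(N)=(d-a)\FF _q^\ast\cup\{0\}$ already has $q\ge\lceil(q+1)/2\rceil$ elements. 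I expect this counting to be the main obstacle: the whole constant $\lceil(q+1)/2\rceil$ is produced by the identity $t^q=-t^{-1}$ valid on $S$, which collapses $f$ to a shape whose fibres have size at most two (the bound being tight only at $q=2$).
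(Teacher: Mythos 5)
Your proof is correct, and while its counting kernel coincides with the paper's, the surrounding architecture is genuinely different. The heart of your argument --- parametrising the nonzero null vectors as $y(t,1)$ with $t$ in the $(q+1)$-element set $S=\{t\mid t^{q+1}=-1\}$ and showing the value map has fibres of size at most two --- is exactly the device of Proposition \ref{ab7.2} (with $S=\Theta$ of Lemma \ref{ab7.3}); your factorisation $f(t)-f(t')=(t-t')\bigl(c+b/(tt')\bigr)$ is a cleaner form of the paper's quadratic $h_z(t)$, whose two roots play the same role (the two sources also place $m_{12}$ and $m_{21}$ on opposite monomials, a harmless transposition convention). Where you diverge is everywhere else. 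The paper normalises $m_{11}=0$ and applies Proposition \ref{ab7.2} only when $m_{12}m_{21}\ne 0$; when an off-diagonal entry vanishes it falls back on the eigenstructure analysis of Propositions \ref{ab3}, \ref{ab4} and \ref{ab5} (unitarily diagonalizable, single eigenvalue with anisotropic eigenvector, isotropic eigenvectors), plus the adjoint symmetry of Lemma \ref{aab2} to swap $m_{12}$ and $m_{21}$. By keeping the diagonal term and absorbing it through $t^{q+1}=-1$ into the constant $(d-a)$ of $f(t)=(d-a)+ct+bt^q$, you handle triangular and diagonal blocks inside the same parametrisation, so the entire eigenvalue case division disappears: $c\ne 0$ gives the two-to-one bound, $c=0\ne b$ gives injectivity via Frobenius (even the stronger count $q+1$), and $b=c=0$, $a\ne d$ gives the $q$-element set $(d-a)\FF_q^\ast\cup\{0\}$, matching part (d) of Proposition \ref{ab3}. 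Your explicit restriction to a non-scalar principal $2\times 2$ block also spells out the reduction the paper compresses into ``it is sufficient to treat $n=2$''. What the paper's longer route buys is the finer information it wants for its own sake --- exact descriptions and cardinalities of $\Nm'_0$ in each eigencase --- whereas your argument is shorter and self-contained for the stated bound. (Your closing aside that the two-to-one bound is tight only at $q=2$ is unsubstantiated, but nothing depends on it.)
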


In the second part of this paper we consider the following question. Fix $k\in \FF _q$ and suppose that all coefficient $m_{ij}$ of the matrix $M$ are elements of $\FF _q$.
For any $k\in \FF _q$ let $\Nm _k(M)_q$ be the set of all $a\in \FF _q$ such that
there is $u\in \FF _q^n$ with $\langle u,u\rangle = k$ and $\langle u,Mu\rangle =a$. If $n>1$, $k=0$ and we also impose that $u\ne 0$, then we get
the definition of $\Nm '_0(M)_q$. Note that $\Nm _k(M)_q \subseteq \Nm _k(M)\cap \FF _q$ and that $\Nm '_0(M)_q \subseteq \Nm' _0(M)\cap \FF _q$. These inclusions are not always equalities (see for instance part (i) of Proposition \ref{qq3}). In this part there are huge difference between the case $q$ even and the case $q$ odd.

In the case $q$ even, for any matrix $M$ we have $\Nm '_0(M)_q\ne \emptyset$, either $\Nm '_0(M)_q =\{0\}$ or $\Nm '_0(M)_q \supseteq\FF _q^\ast$,  and $\Nm '_0(M) =\{0\}$ if and only if $m_{ij}+m_{ji} =0$ for all $i\ne j$ (see Proposition \ref{qq4} for a more general result).

In the case $q$ odd, there is a difference between the case $q\equiv 1\pmod{4}$ (in which $-1$ is a square in $\FF _q$) and the case $q\equiv -1\pmod{4}$ (in which
$-1$ is a not square in $\FF _q$). For instance if $n=2$ and $q\equiv -1\pmod{4}$, then $\Nm '_0(M)_q=\emptyset$ (part (i) of Proposition \ref{qq3}). Now assume $n=2$
and $q\equiv 1\pmod{4}$. By part (iii) of Proposition \ref{qq3} we have:

\begin{enumerate}
\item If $m_{12}+m_{21}\ne 0$, then $\Nm _0(M)_q$ contains at least $(q-1)/2$ elements of $\FF _q^\ast$.

\item Assume $m_{12}+m_{21} =0$. If $m_{11} =m_{22}$, then $\Nm _k(M)_q =\{km_{11}\}$ for all $k\in \FF _q$ and $0\in \Nm '_0(M)_q$. If $m_{11}\ne m_{22}$,
then $\sharp (\Nm _k(M)_q)\le (q+1)/2$ for all $k\in \FF_q$, $\sharp (\Nm _0(M)_q) =(q+1)/2$ and $\sharp (\Nm '_0(M)_q) =(q-1)/2$.\end{enumerate}

\section{Preliminaries}

Let $\II _{n\times n}$ denote the unity $n\times n$ matrix. For any $n\times n$ matrix $N = (n_{ij})$, $n_{ij}\in \FF _{q^2}$ for all $i,j$, set $N^{\dagger} = (n_{ji}^q)$. For all $u,v\in \FF _{q^2}^n$ we have $\langle u,Nv\rangle = \langle N^{\dagger}u,v\rangle$.
The matrix $N$ is called unitary if $N^{\dagger} N = \II _{n\times n}$ (or equivalently $NN^{\dagger} =\II _{n\times n}$). Note that $\Nm _k(M) = \Nm _k(U^{\dagger}MU)$ for every unitary matrix $U$.

\begin{remark}\label{100}
Fix a prime $p$ and let $r$ be a power of $p$. Up to field isomorphisms there is a unique finite field, $\FF _r$, with $r$ elements and $\FF _r =\{x\in \overline{\FF _p}\mid x^r=x\}$.
The group $\FF _r^\ast$ is a cyclic group of order $r-1$ and $\FF _r^\ast = \{x\in \overline{\FF _p}\mid x^{r-1} =1\}$ (\cite[page 1]{h}, \cite[Theorem 2.8]{ln}).
\end{remark}

\begin{remark}\label{+01}
Fix $a\in \FF _q^\ast$. Since $q+1$ is invertible in $\FF _q$, the polynomial $t^{q+1}-a$ and its derivative $(q+1)t^q$ have no common zero. Hence the polynomial $t^{q+1}-a$ has $q+1$ distinct roots in $\overline{\FF _q}$. Fix any one of them, $b$.
Since $a^{q-1} =1$ (Remark \ref{100}), we have $b^{q^2-1} =1$. Hence $b\in \FF _{q^2}^\ast$. Thus there are exactly $q+1$ elements $c\in \FF _{q^2}^\ast$ with $c^{q+1} =a$.
\end{remark}

\begin{remark}\label{0ab1}
Let $\FF$ be a finite field. If $\FF$ has even characteristic, then for each $a\in \FF$ there is a unique $b\in \FF$ with $b^2=a$ (e.g. because $\FF ^\ast$ is a cyclic group with odd order by Remark \ref{100}). If $\FF$ has odd characteristic, then each element of $\FF$ is
a sum of $2$ squares of elements of $\FF$ (\cite[Lemma 5.1.4]{h}).
\end{remark}

\begin{remark}\label{aab1}
If $n\ge 2$, then $\Nm '_0(\II _{n\times n})=\{0\}$, because $C_n(0)\ne \{0\}$ for all $n\ge 2$.
\end{remark}

\begin{lemma}\label{aab2}
$\sharp (\Nm _0(M)) = \sharp (\Nm _0(M^{\dagger}))$ and $\sharp (\Nm'  _0(M)) = \sharp (\Nm'_0(M^{\dagger}))$.
\end{lemma}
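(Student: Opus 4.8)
The plan is to exhibit an explicit bijection between $\Nm_0(M)$ and $\Nm_0(M^{\dagger})$ (and likewise between the primed sets), so that the stated equalities of cardinalities follow at once. The bijection will be the Frobenius map $\phi\colon\FF_{q^2}\to\FF_{q^2}$, $\phi(t)=t^q$, which is a field automorphism of $\FF_{q^2}$ and in particular a bijection of the underlying set.

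First I would record the conjugate-symmetry of the Hermitian form: for all $x,y\in\FF_{q^2}^n$,
\[
\langle x,y\rangle^q=\Bigl(\sum_{i=1}^n x_i^qy_i\Bigr)^q=\sum_{i=1}^n x_i^{q^2}y_i^q=\sum_{i=1}^n y_i^qx_i=\langle y,x\rangle,
\]
where I used $x_i^{q^2}=x_i$ for $x_i\in\FF_{q^2}$ (Remark \ref{100}). Hence $\langle x,y\rangle=\langle y,x\rangle^q$ for all $x,y$. Combining this with the adjoint identity $\langle u,Nv\rangle=\langle N^{\dagger}u,v\rangle$ from the Preliminaries, applied with $N=M$ and $v=u$, I obtain, for every $u\in\FF_{q^2}^n$,
\[
\langle u,Mu\rangle=\langle M^{\dagger}u,u\rangle=\langle u,M^{\dagger}u\rangle^q=\phi\bigl(\langle u,M^{\dagger}u\rangle\bigr).
\]

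The key point is that this identity holds for the \emph{same} vector $u$ on both sides. Since $\Nm_0(M)$ and $\Nm_0(M^{\dagger})$ are computed by letting $u$ range over the identical set $C_n(0)$, the displayed identity shows that
\[
\Nm_0(M)=\{\phi(b)\mid b\in\Nm_0(M^{\dagger})\}=\phi\bigl(\Nm_0(M^{\dagger})\bigr).
\]
As $\phi$ is a bijection of $\FF_{q^2}$, this yields $\sharp(\Nm_0(M))=\sharp(\Nm_0(M^{\dagger}))$. Restricting $u$ to range over $C'_n(0)=C_n(0)\setminus\{0\}$ instead, the very same per-vector identity gives $\Nm'_0(M)=\phi(\Nm'_0(M^{\dagger}))$, and hence $\sharp(\Nm'_0(M))=\sharp(\Nm'_0(M^{\dagger}))$.

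There is no serious obstacle here: the argument is a two-line manipulation of the form. The only points requiring care are that the conjugate-symmetry $\langle x,y\rangle=\langle y,x\rangle^q$ is not stated explicitly in the Preliminaries and must be checked (as above), and that $\phi$, although not the identity map, is genuinely a bijection of $\FF_{q^2}$ (indeed an automorphism of order $2$); it is this bijectivity that lets the two cardinalities agree even though the null-ranges $\Nm_0(M)$ and $\Nm_0(M^{\dagger})$ need not coincide as sets.
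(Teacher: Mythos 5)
Your proof is correct and takes essentially the same route as the paper: both rest on the identity $\langle u,Mu\rangle = \langle M^{\dagger}u,u\rangle = (\langle u,M^{\dagger}u\rangle )^q$ for the \emph{same} vector $u$, combined with the bijectivity of $t\mapsto t^q$ on $\FF _{q^2}$. The only cosmetic difference is that you justify bijectivity by noting that the Frobenius is a field automorphism, while the paper argues via the cyclicity of $\FF _{q^2}^\ast$ and the coprimality of $q$ with $(q+1)(q-1)$.
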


\begin{proof}
Fix $u\in \FF _{q^2}^n$ and let $M$ be an $n\times n$ matrix over $\FF _{q^2}$. We have $\langle u,Mu\rangle = \langle M^{\dagger}u,u\rangle = (\langle u,M^{\dagger}u\rangle )^q$.
 Since $\FF _{q^2}^\ast$ is a cyclic group
of order $(q+1)(q-1)$ and $q$ is coprime with $(q+1)(q-1)$, the map $t\mapsto t^q$ induces a bijection $\FF _{q^2}\to \FF _{q^2}$, proving the lemma.
\end{proof}

\begin{remark}\label{ab2}
Fix $c, d\in \FF _{q^2}$ and $k\in \FF _q$. For any $n\times n$ matrix $M$ over $\FF _{q^2}$ we have $\Nm _k(c\II _{n\times n} +dM) = ck^2 +d\Nm_k(M)$. 
\end{remark}

\begin{lemma}\label{ab1}
Assume $n\ge 2$ and that $M = A\oplus B$ (orthonormal decomposition) with $A$ an $x\times x$ matrix, $B$ an $(n-x)\times (n-x)$ matrix and $0<x<n$. Then  $\Nm _0(M) =\Nm _0(A)+\Nm _0(B) \cup \bigcup _{k\in \FF _q^\ast}(k(\Nm _1(A)-\Nm _1(B))$. We have $0\in \Nm '_0(M)$ if and only if either $x\ge 2$ and $0\in \Nm '_0(A)$ or $x\le n-2$ and $0\in \Nm '_0(B)$ or there is $a\in \Nm _1(A)$ with $-a\in \Nm _1(B)$.
\end{lemma}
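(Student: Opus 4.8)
The plan is to exploit the orthogonal splitting of coordinates coming from $M = A \oplus B$. Write every $u \in \FF _{q^2}^n$ as $u = (u',u'')$ with $u' \in \FF _{q^2}^x$ and $u'' \in \FF _{q^2}^{n-x}$. Orthonormality of the decomposition gives the two additivity identities $\langle u,u\rangle = \langle u',u'\rangle + \langle u'',u''\rangle$ and $\langle u,Mu\rangle = \langle u',Au'\rangle + \langle u'',Bu''\rangle$. Hence $u \in C_n(0)$ precisely when, putting $s := \langle u',u'\rangle \in \FF _q$, one has $\langle u'',u''\rangle = -s$; and then $\langle u,Mu\rangle = \langle u',Au'\rangle + \langle u'',Bu''\rangle$ with $\langle u',Au'\rangle \in \Nm _s(A)$ and $\langle u'',Bu''\rangle \in \Nm _{-s}(B)$. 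Everything reduces to analysing these two summands as $s$ varies.

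For the displayed equality I would split on whether $s = 0$. If $s = 0$ both blocks contribute null-cone vectors, so the realised values are exactly the sumset $\Nm _0(A)+\Nm _0(B)$, and conversely each element of this sumset is attained because $u'$ and $u''$ may be chosen independently. If $s \ne 0$ I invoke the scaling relation $\Nm _s(A) = s\Nm _1(A)$ and $\Nm _{-s}(B) = -s\Nm _1(B)$ (valid for all $s \in \FF _q^\ast$, as recalled in the introduction), so the realised values are precisely $s(\Nm _1(A)-\Nm _1(B))$; the union over $s \in \FF _q^\ast$ yields the second term. The reverse inclusion uses that $\Nm _1(A)$ and $\Nm _1(B)$ are nonempty (a standard basis vector lies on each unit sphere) and that a target $s\alpha$ is hit after rescaling a unit-sphere vector by a scalar of norm $s$, which exists by Remark \ref{+01}; the matching $u''$ is produced the same way with norm $-s$.

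For the characterisation of $0 \in \Nm '_0(M)$ I run the same analysis while recording which blocks are nonzero. If $u'' = 0$ and $u' \ne 0$ then $u' \in C'_x(0)$, which forces $x \ge 2$, and the vanishing of $\langle u,Mu\rangle$ becomes $0 \in \Nm '_0(A)$; the symmetric choice $u' = 0$, $u'' \ne 0$ gives the second alternative. When both blocks are nonzero and $s \ne 0$, the scaling relation turns $\langle u',Au'\rangle + \langle u'',Bu''\rangle = 0$ into $s(\alpha-\beta)=0$ with $\alpha \in \Nm _1(A)$ and $\beta \in \Nm _1(B)$, i.e. a coincidence of a value of $\Nm _1(A)$ with a value of $\Nm _1(B)$ (the third alternative); conversely such a common value $\alpha$ is realised on the spheres $\langle u',u'\rangle = 1$ and $\langle u'',u''\rangle = -1$, producing a nonzero $u$ with $\langle u,Mu\rangle = \alpha - \alpha = 0$.

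The step I expect to be the main obstacle is making this second analysis both exhaustive and bidirectional. The delicate point is the subcase in which both blocks are nonzero yet $s = 0$, so that $u' \in C'_x(0)$, $u'' \in C'_{n-x}(0)$ and $\langle u',Au'\rangle = -\langle u'',Bu''\rangle$; one must determine precisely how this situation relates to the three listed alternatives, all while tracking the constraints $u \ne 0$ together with the thresholds $x \ge 2$ and $n-x \ge 2$ that make $C'_x(0)$ and $C'_{n-x}(0)$ nonempty. Clarifying the contribution of this last subcase — in particular whether it can produce $0 \in \Nm '_0(M)$ independently of the three stated conditions — is where I expect the argument to need the most care.
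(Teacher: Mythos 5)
Your strategy is the same as the paper's (split $u=(u',u'')$, additivity of the Hermitian form, case split on $s=\langle u',u'\rangle$), and your proof of the displayed equality for $\Nm _0(M)$ is complete and matches the paper's argument. The genuine gap is exactly the subcase you flagged and left open: $u'\ne 0$, $u''\ne 0$, $s=0$. Without it the ``only if'' direction of the characterization of $0\in \Nm '_0(M)$ is unproven, and your suspicion that this subcase cannot be absorbed into the three listed alternatives is correct: there one gets $c:=\langle u',Au'\rangle \in \Nm '_0(A)$ and $-c\in \Nm '_0(B)$ with possibly $c\ne 0$, which is an independent source of $0\in \Nm '_0(M)$. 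Concretely, take any $q$, fix $\eta \in \FF _{q^2}\setminus \FF _q$, and let $A=\op{diag}(0,1)$, $B=\op{diag}(\eta ,1+\eta)$, $n=4$, $x=2$. By part (d) of Proposition \ref{ab3}, $\Nm '_0(A)=\Nm '_0(B)=\FF _q^\ast$, so $0\notin \Nm '_0(A)$ and $0\notin \Nm '_0(B)$; moreover $\Nm _1(A)=\FF _q$ and $\Nm _1(B)=\eta +\FF _q$, so there is no $a\in \Nm _1(A)$ with $\pm a\in \Nm _1(B)$. Yet choosing isotropic $u'\ne 0$ with $\langle u',Au'\rangle =1$ (take $u'=(s,1)$ with $s^{q+1}=-1$, Lemma \ref{ab7.3}) and isotropic $u''\ne 0$ with $\langle u'',Bu''\rangle =-1$ gives $0\in \Nm '_0(M)$. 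So the equivalence as stated fails in your open subcase, and a correct statement needs a fourth alternative, ``$x\ge 2$, $n-x\ge 2$ and $\Nm '_0(A)\cap (-\Nm '_0(B))\ne \emptyset$''. Note that the paper's own proof has the same lacuna: its closing sentence correctly records the admissible cases ($k\ne 0$, or $\langle v,v\rangle =\langle w,w\rangle =0$ with $(v,w)\ne (0,0)$) but then silently identifies the second case with the first two listed alternatives.

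A second, smaller point: in the case $s\ne 0$ you derive the condition $\alpha =\beta$, i.e.\ $\Nm _1(A)\cap \Nm _1(B)\ne \emptyset$, and then call it ``the third alternative''; but the statement's third alternative reads ``$-a\in \Nm _1(B)$''. For odd $q$ these differ, and your version is the one forced by the scaling $\Nm _{-s}(B)=-s\Nm _1(B)$ and the only one consistent with the displayed union $s(\Nm _1(A)-\Nm _1(B))$. The literal statement is in fact wrong for odd $q$: for $M=\op{diag}(1,-1)$ (so $x=n-x=1$) one has $1\in \Nm _1(A)$ and $-1\in \Nm _1(B)$, yet for every isotropic $u=(a,b)\ne 0$ one computes $\langle u,Mu\rangle =2a^{q+1}\ne 0$, so $0\notin \Nm '_0(M)$. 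You should therefore not have equated your (correct) condition with the statement's literal one; the honest conclusion is that the statement carries a sign slip (for $q$ even the two conditions coincide, since $-1=1$).
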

\begin{proof}
Take $u = (v,w)\in \FF _{q^2}^n$ with $\langle u,u\rangle =0$, $v\in \FF _{q^2}^x$ and $w\in \FF _{q^2}^{n-x}$. We have $\langle u,Mu\rangle = \langle v,Av\rangle
+ \langle v,Bv\rangle$. We have $\langle u,u\rangle =\langle v,v\rangle +\langle w,w\rangle$
and hence the assumption ``$\langle u,u\rangle =0$'' is equivalent to the assumption ``$\langle w,w\rangle =-\langle v,v\rangle$'' (note that this is also true when $q$ is even).
First assume $\langle v,v\rangle =0$. We get $\langle w,w\rangle =0$, $\langle v,Av\rangle \in \Nm _0(A)$ and  $\langle w,Aw\rangle \in \Nm _0(B)$
and so $\Nm _0(M)\supseteq \Nm _0(A)+\Nm _0(B)$. Now assume $k:= \langle v,v\rangle \ne 0$. We get $\langle u,Mu\rangle =a+b$ with $a\in \Nm _k(A)$ and
$b\in \Nm _{-k}(B)$. Since $\Nm _x(M) =x\Nm _1(M)$ for all $x\ne 0$, we have $\Nm _k(M) = -\Nm _{-k}(M)$ if $k\ne 0$. Hence $\Nm _0(M)\subseteq \Nm _0(A)+\Nm _0(B) \cup \bigcup _{k\in \FF _q^\ast}k(\Nm _1(A)-\Nm _1(B))$. Since $u=0$ if and only if $v=0$ and $w=0$,
we get that $0\in \Nm '_0(M)$ if and only if we came from a case with $k\ne 0$ or with a case in which $\langle v,v\rangle =\langle w,w\rangle =0$
and either $v\ne 0$ or $w\ne 0$.
\end{proof}

\begin{proposition}\label{ab3}
Assume that $M$ is unitarily equivalent to a diagonal matrix with $c_1,\dots ,c_k$, $k\ge 2$, different eigenvalues and $c_i$ occurring with multiplicity $x_i>0$.

\quad (a) If $k\ge 3$, then $\Nm _0(M) =\FF _{q^2}$.

\quad (b) If $k\ge 3$, then $0\in \Nm '_0(M)$ if and only if either $k\ge 4$ or $n\ge 4$ or $n=k=3$ and $(c_3-c_1)/(c_2-c_1)\in \FF _q^\ast$.

\quad ({c}) If $k=2$ and $n\ge 3$, then $\Nm '_0(M) =\{t(c_2-c_1)\}_{t\in \FF _q}$.

\quad (d) If $k=n=2$, then  $\Nm '_0(M) =\{t(c_2-c_1)\}_{t\in \FF _q^\ast}$.
\end{proposition}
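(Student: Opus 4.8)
The plan is to strip the problem down to a single $\FF_q$-linear computation. Since $\Nm_0$ and $\Nm'_0$ are unchanged under $M\mapsto U^{\dagger}MU$ for unitary $U$, I may assume $M=\op{diag}(\lambda_1,\dots,\lambda_n)$ with each $\lambda_j$ equal to one of the $c_i$ and $c_i$ occurring $x_i$ times; moreover subtracting $c_1\II_{n\times n}$ changes $\langle u,Mu\rangle$ by $c_1\langle u,u\rangle=0$ on $C_n(0)$, hence alters neither $\Nm_0(M)$ nor $\Nm'_0(M)$ (cf. Remark~\ref{ab2}), so I normalize $c_1=0$. For $u=(u_1,\dots,u_n)$ set $t_i:=\sum_{j:\lambda_j=c_i}u_j^{q+1}\in\FF_q$, the partial norm sum over the $i$-th eigenblock. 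Then $\langle u,u\rangle=\sum_{i=1}^k t_i$ and $\langle u,Mu\rangle=\sum_{i=1}^k c_i t_i$, both expressed solely through the $t_i$.

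The first key step is that each $t_i$ runs independently through all of $\FF_q$: by Remark~\ref{+01} the norm $x\mapsto x^{q+1}$ is onto $\FF_q^{\ast}$, so a single nonzero entry of the $i$-th block realizes any value of $\FF_q^{\ast}$ and the zero vector realizes $0$. Hence, over $u\in C_n(0)$, the only constraint is $\sum_i t_i=0$ and
\[
\Nm_0(M)=\Bigl\{\,\textstyle\sum_{i=1}^k c_i t_i \;\Big|\; t_i\in\FF_q,\ \sum_{i=1}^k t_i=0\,\Bigr\}=\sum_{i=2}^{k}\FF_q\,(c_i-c_1)=:V,
\]
the $\FF_q$-span in $\FF_{q^2}$ of the eigenvalue differences. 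Everything now reduces to understanding $V$, plus separate bookkeeping of whether the value $0$ is attained by a \emph{nonzero} $u$.

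Parts (c) and (d) ($k=2$) are then immediate: $V=\FF_q(c_2-c_1)$ is one dimensional, giving $\Nm_0(M)=\{t(c_2-c_1)\}_{t\in\FF_q}$. Any value with $t\ne 0$ forces $t_2\ne 0$, hence $u\ne 0$, so it lies in $\Nm'_0(M)$; the value $0$ lies in $\Nm'_0(M)$ iff some nonzero $u$ has all $t_i=0$, which happens precisely when some block has $x_i\ge 2$, i.e. $n\ge 3$. This is exactly the split between (c) and (d).

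For (a) and (b) ($k\ge 3$) I would fix an $\FF_q$-basis $\{1,\theta\}$ of $\FF_{q^2}$, write $c_i=a_i+b_i\theta$, and introduce the $3\times k$ matrix $R$ with rows $(1,\dots,1)$, $(a_1,\dots,a_k)$, $(b_1,\dots,b_k)$. Then (a) asserts $V=\FF_{q^2}$, which is equivalent to $\op{rank}R=3$, i.e. to the eigenvalues not lying on one $\FF_q$-affine line; proving this spanning property is the main obstacle, and the governing invariant is precisely the collinearity ratio $(c_3-c_1)/(c_2-c_1)$ that reappears in (b). For (b), $0\in\Nm'_0(M)$ iff some nonzero $u\in C_n(0)$ has $\langle u,Mu\rangle=0$, which happens in exactly two ways: either a nonzero $(t_i)\in\ker R$ exists (so $\op{rank}R<k$, i.e. $k\ge 4$, or $k=3$ with $R$ singular, the collinear case $(c_3-c_1)/(c_2-c_1)\in\FF_q^{\ast}$) -- and any such $(t_i)$ is realized by an honest $u\in C'_n(0)$ through surjectivity of the norm -- or all $t_i=0$ with $u\ne 0$, possible exactly when some $x_i\ge 2$, i.e. $n>k$. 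Since $k\ge 3$, the union of these two possibilities is exactly ``$k\ge 4$, or $n\ge 4$, or $n=k=3$ with collinear eigenvalues'', which is (b). The one point demanding care throughout is this last realizability check, together with the rank analysis of $R$ underlying (a).
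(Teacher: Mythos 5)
Your reduction to the block-norm variables $t_i$ is sound: by Remark \ref{+01} each $t_i$ sweeps all of $\FF _q$ independently, every nonzero tuple $(t_1,\dots ,t_k)$ is realized by some $u\ne 0$, and the zero tuple is realized by a nonzero $u$ exactly when some $x_i\ge 2$, i.e. $n>k$. On that basis your proofs of (b), (c) and (d) are correct and agree in substance with the paper's steps (ii)--(v): your observation $\op{rank} R\le 3<k$ for $k\ge 4$ is the paper's ``homogeneous system with $2$ equations and $3$ unknowns''; your kernel criterion for $k=3$ is the paper's condition $(c_3-c_1)/(c_2-c_1)\in \FF _q^\ast$ (the ratio is automatically nonzero, so membership in $\FF _q$ and in $\FF _q^\ast$ coincide here); and your ``$n>k$, so some $x_i\ge 2$'' case is the paper's steps (iv)--(v). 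Your bookkeeping of which tuples come from nonzero $u$ is if anything cleaner than the paper's.

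The gap you flag in (a), however, is genuine and cannot be filled: $\op{rank} R=3$ does \emph{not} follow from $k\ge 3$. If the distinct eigenvalues all lie on one $\FF _q$-affine line of $\FF _{q^2}$ --- possible whenever $3\le k\le q$, e.g. $M=\op{diag}(0,1,2)$ for odd $q\ge 3$, where all $c_i\in \FF _q$ --- then, normalizing $c_1=0$, your own identity $\Nm _0(M)=\sum_{i\ge 2}\FF _q(c_i-c_1)$ gives $\Nm _0(M)=\{t(c_2-c_1)\}_{t\in \FF _q}$, a set of $q$ elements, so part (a) as stated is false. The paper's proof makes exactly the oversight your rank analysis isolates: in its step (i) it asserts that every $a\in \FF _{q^2}$ is ``uniquely determined'' as $a=a_2c_2+a_3c_3$ with $a_2,a_3\in \FF _q$, which tacitly assumes $c_2-c_1,\ c_3-c_1$ to be an $\FF _q$-basis of $\FF _{q^2}$ --- precisely the non-collinearity $(c_3-c_1)/(c_2-c_1)\notin \FF _q$ that the paper itself tracks in part (b). So you should not attempt to prove the spanning property; instead your framework yields the corrected statement: for $k\ge 3$, $\Nm _0(M)=\FF _{q^2}$ if and only if $\op{rank} R=3$ (equivalently, some $(c_i-c_1)/(c_j-c_1)\notin \FF _q$), and $\Nm _0(M)=\{t(c_2-c_1)\}_{t\in \FF _q}$ in the collinear case. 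Note that (a) is true whenever $k>q$, since an $\FF _q$-affine line contains only $q$ points; in particular the counterexamples require $3\le k\le q$.
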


\begin{proof}
Note that $c_i-c_j\in \FF _{q^2}^\ast$ for all $i\ne j$. Since $\FF _{q^2}$ is a $2$-dimensional $\FF _q$-vector space, $c_3-c_1$ and $c_2-c_1$
are a basis of $\FF _{q^2}$ over $\FF _q$ (i.e. $(c_3-c_1)/(c_2-c_1)\in \FF _q^\ast$) if and only if $c_3-c_2$ and $c_1-c_2$ are another basis of $\FF _{q^2}$. Hence $(c_3-c_1)/(c_2-c_1)\in \FF _q^\ast$ $\Leftrightarrow$ $(c_3-c_2)/(c_1-c_2)\in \FF _q^\ast$ $\Leftrightarrow$ $(c_2-c_1)/(c_3-c_1)\in \FF _q^\ast$. 

By Remark \ref{ab1} we reduce to the case $c_1=0$. Fix $a\in \FF _{q^2}$.

\quad (i) First assume $k\ge  3$. Up to a unitary transformation we may assume that $e_1$ is an eigenvector of $M$ with eigenvalue $0$, $e_2$ is an eigenvector of $M$ with
eigenvalue $c_2\in \FF _{q^2}\setminus \{0\}$ and $e_3$ is an eigenvector of $M$ with
eigenvalue $c_3\in \FF _{q^2}\setminus \{0,c_2\}$. Since $\FF _{q^2}$ is a two-dimensional $\FF_q$-vector space, there are uniquely determined $a_2,a_3\in \FF _q$
such that $a = a_2c_2+a_3c_3$. By Remark \ref{+01} there are $u_i\in \FF _{q^2}$, $i=2,3$, such that $u_i^{q+1} = a_i$, $i=2,3$. Take $u_1\in \FF _{q^2}$ such that $u_1^{q+1} = -a_2-a_3$
(Remark \ref{+01}) and set $u:= u_1e_1+u_2e_2+u_3e_3$. We have $\langle u,u\rangle = \sum _{i=1}^{3} u_i^{q+1} =0$ and $\langle u,Mu\rangle = c_2u_2^{q+1}+c_3u_3^{q+1} =a$.
Hence $\Nm _0(M) =\FF _{q^2}$, proving part (a). 

\quad (ii) Now take $k=n= 3$. We need to check when $0\in \Nm '_0(M)$. We need to find $u_1,u_2,u_3\in \FF _{q^2}$ such that
$(u_1,u_2,u_3)\ne (0,0,0)$, $u_1^{q+1} +u_2^{q+1} +u_3^{q+1} =0$ and
$c_1u_1^{q+1} +c_2u_2^{q+1} +c_3u_3^{q+1} =0$. The previous conditions are satisfied if and only if there is $ (u_2,u_3)\ne (0,0)$ such that $(c_2-c_1)u_2^{q+1} +(c_3-c_1)u_3^{q+1}=0$. Since $u_2^{q+1}$ and $u_3^{q+1}$ are elements of $\FF _q$, $c_3-c_2\ne 0$ and $c_2-c_1\ne 0$, this is possible if and
only if $(c_3-c_1)/(c_2-c_1)\in \FF _q$. 

\quad (iii) Now assume $k\ge 4$. We may assume $c_1=0$ and that $e_i$ is an eigenvalue for $c_i$. We get that it is sufficient to find $u_2,u_3,u_4$ with $(u_2,u_3,u_4)\ne (0,0,0)$
and $\sum _{i=2}^{4} (c_i-c_1)u_i^{q+1} =0$. Since the map $\FF _{q^2}^\ast \to \FF _ q^\ast$ defined by the formula $t\mapsto t^{q+1}$ is surjective (Remark \ref{+01}), it is sufficient
to find $b_i\in \FF _q$, $2\le i\le 4$, such that $(b_2,b_3,b_4)\ne (0,0,0)$ and
\begin{equation}\label{eqa+b1}
\sum _{i=2}^{4} (c_i-c_1)b_i =0
\end{equation}
Since $\FF _{q^2}$ is a $2$-dimensional vector space over $\FF _q$, (\ref{eqa+b1}) is equivalent to  a homogenous linear system with $2$ equations and $3$ unknowns over $\FF _q$
and hence it has a non-trivial solution.

\quad (iv) Now assume $k=3$ and $n\ge 4$. Without losing generality we may assume that the eigenspace of $c_1$ contains $e_1,e_2$. Use Remark \ref{aab1}.

\quad (v) Assume $k=2$. We reduce to the case $c_1=0$ and hence $c_2-c_1\ne 0$. Let $V_1$ (resp. $V_2)$ the eigenspace for the eigenvalue $0$ (resp. $c_2-c_1$). Take $u\in \FF _{q^2}$
and write $u = u_1+u_2$ with $u_1\in V_1$ and $u_2\in V_2$. Since $\langle v,w\rangle =0$ for all $v\in V_1$ and $w\in V_2$, we have
$\langle u,u\rangle = \langle u_1,u_1\rangle +\langle u_2,u_2\rangle$ and $\langle u,Mu\rangle = (c_2-c_1)\langle u_2,u_2\rangle$. Since $\langle u_2,u_2\rangle \in \FF _q$,
we get $\Nm _0(M)\subseteq \{t(c_2-c_1)\}_{t\in \FF _q}$. Since we may take as $\langle u_2,u_2\rangle$
any $\alpha \in \FF _q$ and then take $u_1$ with $\langle u_1,u_1\rangle =-\alpha$, we get $\Nm _0(M) = \{t(c_2-c_1)\}_{t\in \FF _q}$. If $n=2$ we have
$\langle u,Mu\rangle =0$ if and only if $u_2=0$. Hence it $n=2$ we have $\langle u,u\rangle =0$
if and only if $u_1=u_2=0$ and so $0\notin \Nm '_0(M)$. If $n\ge 3$, then $x_i\ge 2$ for some $i$ and hence $0\in \Nm '_0(M)$ (Remark \ref{aab1}).
\end{proof}

The case $a=-1$ of Remark \ref{+01} gives the following lemma.

\begin{lemma}\label{ab7.3}
Set $\Theta := \{a\in \overline{\FF _q}\mid a^{q+1}=-1\}$. Then $\sharp (\Theta )=q+1$ and $\Theta \subset \FF _{q^2}^\ast$.
\end{lemma}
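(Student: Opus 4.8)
The plan is to deduce the statement directly from Remark \ref{+01} by specializing to $a=-1$. First I would check that this choice is legitimate, i.e. that $-1\in\FF_q^\ast$. This holds in every characteristic: $-1\neq 0$ in any field, including the even-characteristic case where $-1=1$ but is still a nonzero scalar. Hence the hypotheses of Remark \ref{+01} are satisfied with $a=-1$.

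Applying Remark \ref{+01} with $a=-1$, the polynomial $t^{q+1}+1$ has $q+1$ distinct roots in $\overline{\FF_q}$, and there are exactly $q+1$ elements $c\in\FF_{q^2}^\ast$ satisfying $c^{q+1}=-1$. The only remaining point is to identify this set of $q+1$ elements with $\Theta$ as defined in the statement. Since $0^{q+1}=0\neq -1$, the element $0$ is not a root of $t^{q+1}+1$, so every element of $\Theta$ is automatically nonzero; by Remark \ref{+01} each such element then lies in $\FF_{q^2}^\ast$. Therefore $\Theta$ coincides exactly with the set of those $q+1$ elements of $\FF_{q^2}^\ast$, which simultaneously yields $\sharp(\Theta)=q+1$ and the inclusion $\Theta\subset\FF_{q^2}^\ast$.

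I do not anticipate any genuine obstacle: the lemma is a verbatim specialization of Remark \ref{+01}, and the only step meriting a word of care is the observation that $-1$ is a nonzero scalar, so that the remark applies uniformly regardless of the parity of $q$.
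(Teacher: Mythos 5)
Your proof is correct and matches the paper exactly: the paper derives Lemma \ref{ab7.3} precisely as the case $a=-1$ of Remark \ref{+01}, which is what you do. Your extra remark that $-1\in\FF_q^\ast$ holds even in characteristic $2$ (where $-1=1$) is a worthwhile sanity check that the paper leaves implicit.
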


We write $M = (m_{ij})$.

\begin{proposition}\label{ab4}
Take $n=2$ and assume that $M$ has a unique eigenvalue, $c$, and that the associated eigenspace is one-dimensional and generated by an eigenvector $u$ with $\langle u,u\rangle \ne 0$.
We have $0\notin \Nm '_0(M)$. If $q$ is even, then $\Nm '_0(M) =\FF _{q^2}^\ast$. If $q$ is odd, then $\sharp (\Nm '_0(M)) =(q^2-1)/2$.
\end{proposition}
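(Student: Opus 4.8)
The plan is to reduce $M$ to a normal form, then carry out an explicit computation and a short group-theoretic count. First I would subtract the scalar matrix: set $N := M - c\II_{2\times 2}$. For every $u$ with $\langle u,u\rangle =0$ we have $\langle u,Mu\rangle = c\langle u,u\rangle +\langle u,Nu\rangle = \langle u,Nu\rangle$, so $\Nm'_0(M)=\Nm'_0(N)$ (this is the case $k=0$ of Remark \ref{ab2}). Since $M$ has the single eigenvalue $c$ with a one-dimensional eigenspace, $N$ is a nonzero nilpotent matrix with $N^2=0$ and $\ker N = \langle u\rangle$. As $\Nm'_0$ is invariant under unitary conjugation, I would next normalize $N$. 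Because $\langle u,u\rangle\ne 0$ and the norm $t\mapsto t^{q+1}$ is onto $\FF_q^\ast$ (Remark \ref{+01}), I can rescale $u$ to a unit vector and complete it to an orthonormal basis (the orthogonal complement of an anisotropic vector is again a nondegenerate Hermitian line, hence contains a unit vector). In this basis $Ne_1=0$ and $Ne_2\in\ker N$, so $N=\begin{pmatrix}0&\beta\\0&0\end{pmatrix}$ with $\beta\in\FF_{q^2}^\ast$.

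Second, I would compute $\Nm'_0(N)$ explicitly. For $u=(x,y)$ one has $\langle u,u\rangle = x^{q+1}+y^{q+1}$ and $\langle u,Nu\rangle = \beta x^q y$. The condition $\langle u,u\rangle =0$ with $u\ne 0$ forces $x,y\in\FF_{q^2}^\ast$ (if either coordinate vanishes, so does the other). In particular $\langle u,Nu\rangle = \beta x^q y\ne 0$, which already yields $0\notin\Nm'_0(N)=\Nm'_0(M)$. Writing $z=x/y$, the constraint becomes $z^{q+1}=-1$, i.e. $z\in\Theta$ (Lemma \ref{ab7.3}), and the value is $\beta z^q y^{q+1}$. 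As $y$ runs over $\FF_{q^2}^\ast$, $y^{q+1}$ runs over all of $\FF_q^\ast$; and raising to the $q$-th power permutes $\Theta$, since $(w^q)^{q+1}=(w^{q+1})^q=(-1)^q=-1$. Hence $\Nm'_0(M)=\beta\cdot(\Theta\cdot\FF_q^\ast)$, where $\Theta\cdot\FF_q^\ast=\{ws\mid w\in\Theta,\ s\in\FF_q^\ast\}$.

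Finally I would count $\Theta\cdot\FF_q^\ast$ inside the cyclic group $\FF_{q^2}^\ast$ of order $(q-1)(q+1)$, writing $H=\{w\mid w^{q+1}=1\}$ for the norm-one subgroup. When $q$ is even we have $-1=1$, so $\Theta=H$, and $H\cap\FF_q^\ast=\{w\mid w^{\gcd(q+1,q-1)}=1\}=\{1\}$; thus $\sharp(H\cdot\FF_q^\ast)=(q+1)(q-1)=q^2-1$, giving $\Theta\cdot\FF_q^\ast=\FF_{q^2}^\ast$ and $\Nm'_0(M)=\FF_{q^2}^\ast$. When $q$ is odd, $\Theta=w_0H$ is a coset of $H$ (any $w_0$ with $w_0^{q+1}=-1$), so $\Theta\cdot\FF_q^\ast=w_0\,(H\cdot\FF_q^\ast)$ has the same cardinality as $H\cdot\FF_q^\ast$; here $H\cap\FF_q^\ast=\{w\mid w^2=1\}=\{\pm1\}$ has order $2$, so $\sharp(\Theta\cdot\FF_q^\ast)=(q+1)(q-1)/2=(q^2-1)/2$, and multiplying by $\beta$ gives $\sharp(\Nm'_0(M))=(q^2-1)/2$.

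I expect the delicate points to be the normalization—justifying the orthonormal basis adapted to the anisotropic eigenvector and confirming that unitary conjugation applies—and the bookkeeping in the final count, in particular identifying $\Theta$ as a coset of the norm-one subgroup and evaluating $H\cap\FF_q^\ast$ in the two parity cases. The evaluation $\langle u,Nu\rangle=\beta x^q y$ and the surjectivity of the norm are routine once the normal form is available.
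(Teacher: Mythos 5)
Your proof is correct and takes essentially the same route as the paper: reduce to eigenvalue $0$, use a unitary change of basis adapted to the rescaled anisotropic eigenvector to reach the normal form $m_{11}=m_{21}=m_{22}=0$, $m_{12}=\beta \ne 0$, compute $\langle u,Mu\rangle =\beta x^qy$, and identify the value set as $\beta \cdot \Theta \cdot \FF _q^\ast$ with $\Theta$ as in Lemma \ref{ab7.3}. The only difference is cosmetic, in the final count: the paper analyzes collisions $cz=c_1z_1$ directly (forcing $c^2=c_1^2$), whereas you write $\Theta$ as a coset of the norm-one subgroup $H$ and use $\sharp (H\cdot \FF _q^\ast )=\sharp (H)\sharp (\FF _q^\ast )/\sharp (H\cap \FF _q^\ast )$ --- both give $q^2-1$ for $q$ even and $(q^2-1)/2$ for $q$ odd.
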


\begin{proof}
Taking $M-c\II _{2\times 2}$ instead of $M$ we reduced to the case $c=0$. Take $t\in \FF _{q^2}$ such that $t^{q+1}= \langle u,u\rangle$ (Remark \ref{+01}). Using $t^{-1}u$ instead of $u$ we reduce
to the case $\langle u,u\rangle =1$. Hence, up to a unitary transformation we reduce to the case $u=e_1$. In this case we have $m_{11} =m_{21}=0$.
Since $m_{22}$ is an eigenvalue of $M$, we have $m_{22}=0$. Since $e_2$ is not an eigenvector of $M$, we have $m_{12}\ne 0$. Take
$v = ae_1+be_2$ such that $\langle v,v\rangle =0$, i.e. such that $a^{q+1}+b^{q+1} =0$. We have $\langle v,Mv\rangle = \langle v,m_{12}be_1\rangle = a^qbm_{12}$.
Note that $a=0$ if and only if $b=0$ and hence $0\notin \Nm'_0(M)$. Take $\Theta$ as in Lemma \ref{ab7.3}. Since the multiplication by $m_{12}$
is injective, it is sufficient to count the number of elements of the set $\Delta$ of all $a^qb$ with $ab\ne 0$ and $a^{q+1}+b^{q+1} =0$.
There is a unique $z\in \Theta$ such that $b=az$, but for a fixed $a$ we may take any $z\in \Theta$ and then set $b:= az$. Varying $a\in \FF _{q^2}^\ast$ we get as $a^{q+1}$ all elements of $\FF _q^\ast$ (Remark \ref{+01}). Thus $\Delta$ is the set
of all products $cz$ with $c\in \FF _q^\ast$ and $z\in \Theta$.  Note that $\sharp (\FF _q^\ast )\cdot \sharp (\Theta ) =\sharp (\FF _{q^2}^\ast )$ by Lemma \ref{ab7.3}. Take $c, c_1\in \FF _q^\ast$ and $z, z'\in \Theta$ and assume
$cz = c_1z_1$. Hence $c^{q+1}z^{q+1} = c_1^{q+1}z_1^{q+1}$. Since $z^{q+1} =z_1^{q+1} =-1$, we get $c^{q+1} =c_1^{q+1}$. Since $c, c_1\in \FF _q^\ast$,
we get $c^2=c_1^2$. If $q$ is even, we get $c=c_1$. Hence $z=z_1$. Hence if $q$ is even we get $\sharp (\Nm '_0(M)) =q^2-1$ and (since $0\notin \Nm '_0(M)$), we get
$\Nm '_0(M) =\FF _{q^2}^\ast$. Now assume that $q$ is odd. We get that either $c=c_1$ or $c=-c_1$. If $c=c_1$, then we get $z=z_1$. Now assume $c=-c_1$ and hence
$z=-z_1$. We get $cz = (-c)(-z)$. In this case the set of all $cz$, $c\in \FF _q^\ast$ and $z\in \Theta$ has cardinality $(q^2-1)/2$ and hence $\sharp (\Nm '_0(M)) =(q^2-1)/2$.
\end{proof}

\begin{proposition}\label{ab5}
Take $n=2$ and assume that $M$ has two distinct eigenvalues $c_1,c_2$ and eigenvectors $u_i$ of $c_i$, $1\le i \le 2$, with $\langle u_i,u_i\rangle =0$ for all $i$.
Then there is $o\in \FF _{q^2}^\ast$ such that $\Nm '_0(M) =\{to\}_{t\in \FF _q}$.
\end{proposition}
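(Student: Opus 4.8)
The plan is to work directly in the eigenbasis $u_1,u_2$ and reduce the computation of $\langle u,Mu\rangle$ on the isotropic cone to a single trace condition. Since $c_1\ne c_2$, the eigenvectors $u_1,u_2$ are linearly independent, hence a basis of $\FF_{q^2}^2$. I would write an arbitrary vector as $u=\alpha u_1+\beta u_2$ with $\alpha,\beta\in\FF_{q^2}$, so that $Mu=c_1\alpha u_1+c_2\beta u_2$. Using that $\langle\ ,\ \rangle$ is conjugate-linear in the first slot and linear in the second, together with the hypotheses $\langle u_1,u_1\rangle=\langle u_2,u_2\rangle=0$ and the identity $\langle u_2,u_1\rangle=\langle u_1,u_2\rangle^q$, I would expand both forms. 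Setting $h:=\langle u_1,u_2\rangle$ and $w:=\alpha^q\beta h$, a short computation gives $\langle u,u\rangle=w+w^q$ and $\langle u,Mu\rangle=c_2w+c_1w^q$.

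The first key point is that $h\ne 0$. Indeed, if $h=0$, then $u_1$ would be orthogonal to both basis vectors $u_1,u_2$ (as $\langle u_1,u_1\rangle=0$ as well), hence orthogonal to all of $\FF_{q^2}^2$, contradicting the non-degeneracy of the standard Hermitian form. With $h\ne 0$ in hand, I would impose the isotropy condition $\langle u,u\rangle=0$, which reads $w^q=-w$, i.e. $w$ lies in the kernel of the trace map $\op{Tr}\colon\FF_{q^2}\to\FF_q$, $\op{Tr}(x)=x+x^q$. Substituting $w^q=-w$ into the expression for $\langle u,Mu\rangle$ collapses it to $\langle u,Mu\rangle=(c_2-c_1)w$.

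It then remains to identify the set of values of $(c_2-c_1)w$. The trace is a nonzero $\FF_q$-linear surjection $\FF_{q^2}\to\FF_q$, so its kernel is a one-dimensional $\FF_q$-subspace, say $\FF_q\,o_0$ with $o_0\ne 0$. I would check that every trace-zero value $w$ is actually attained by a nonzero isotropic $u$: since $h\ne 0$, choosing $\alpha=1$ and $\beta=wh^{-1}$ gives $\alpha^q\beta h=w$ with $u=u_1+\beta u_2\ne 0$, while $w=0$ is realized by $u=u_1$. Hence $\Nm'_0(M)=(c_2-c_1)\FF_q\,o_0=\{to\}_{t\in\FF_q}$ with $o:=(c_2-c_1)o_0\in\FF_{q^2}^\ast$, which is the assertion.

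The computation itself is routine; the only points requiring care are the bookkeeping of the conjugate-linearity convention when expanding the Hermitian form, and the two structural facts that make everything work, namely $h\ne 0$ and that $w^q=-w$ is exactly the trace-zero condition cutting out a line. I expect the identification $h\ne 0$ to be the main conceptual step, since it is what prevents the whole plane from being totally isotropic and simultaneously guarantees that $o$ is nonzero.
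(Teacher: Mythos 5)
Your proof is correct, and it follows the same basic plan as the paper's: expand in the eigenbasis, use non-degeneracy of the form to get $h=\langle u_1,u_2\rangle \ne 0$, and turn the isotropy condition into $w^q=-w$ for the single quantity $w=\alpha^q\beta h$ (the paper sets $w=eb/a$ after first reducing to $c_1=0$, which is the same quantity up to the factor $a^{q+1}\in \FF_q^\ast$). Where you genuinely diverge is the endgame. The paper identifies the value set by invoking surjectivity of the norm $a\mapsto a^{q+1}$ onto $\FF_q^\ast$ and then splitting by parity: for $q$ even the nonzero solutions of $w^q+w=0$ satisfy $w^{q-1}=1$ and so lie in $\FF_q^\ast$, while for $q$ odd one fixes $w_1$ with $w_1^{q-1}=-1$ and observes $w/w_1\in \FF_q^\ast$; the value $0$ is then handled separately via the isotropic eigenvectors, which force nonzero coordinates $a,b$ in the main computation. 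You instead observe at one stroke that the values on the isotropic cone are exactly $(c_2-c_1)\cdot\ker \op{Tr}$, where $\op{Tr}(x)=x+x^q$, that this kernel is a one-dimensional $\FF_q$-subspace, and that each element of it is attained via $\alpha=1$, $\beta=wh^{-1}$ (with $w=0$ absorbed as $\beta =0$, i.e. $u=u_1$, so no separate treatment of vanishing coordinates is needed). This buys uniformity: no parity split, no reduction to $c_1=0$, and no separate argument that $0\in \Nm'_0(M)$, since both $\langle u,u\rangle =\op{Tr}(w)$ and $\langle u,Mu\rangle =c_2w+c_1w^q$ are expressed in the one variable $w$. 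What the paper's more computational route makes explicit --- the exact count of $q-1$ nonzero values and a concrete generator $o=w_1c$ --- your argument recovers for free, since $\ker \op{Tr}$ has exactly $q$ elements and any nonzero trace-zero element serves as $o_0$.
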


\begin{proof}
Each $u_i$ gives that $0\in \Nm '_0(M)$. Since $u_1$ and $u_2$ are a basis of $\FF _{q^2}^2$, $\langle \ ,\ \rangle$ is non-degenerate and $\langle u_i,u_i\rangle =0$ for all $i$, we have
$e:= \langle u_1,u_2\rangle \ne 0$. Note that $\langle u_2,u_1\rangle =e^q$. Taking $M-c_1\II_{2\times 2}$ instead of $M$ we reduce to the case $c_1=0$ and hence $c:= c_2-c_1 \ne 0$. Take
$a, b\in \FF _{q^2}^\ast$ and set $u:= au_1+bu_2$. We have $\langle u,u\rangle =e^qb^qa +ea^q$. Hence $\langle u,u\rangle =0$ if and only if $e^qb^qa +ea^b=0$.
We have $\langle u,Mu\rangle = \langle u,cbu_2\rangle = ea^qbc$. Set $w:= eb/a$. We have $\langle u,u\rangle =0$ if and only if $w^q+w=0$. Since $b\ne 0$, we have $w\ne 0$ and so
$\langle u,u\rangle =0$ if and only if $w^{q-1} +1=0$.
We have $\langle u,Mu\rangle =a^{q+1}wc$. By Remark \ref{+01} varying $a\in \FF_{q^2}^\ast$ we get as $a^{q+1}$ an arbitrary element of $\FF _q^\ast$. If $q$ is even, $w$ is an arbitrary element of $\FF _q^\ast$, because $w^{q-1}=1$ and $\FF _q^\ast =\{t\in \overline{\FF _q}\mid t^{q-1}=1\}$
and hence varying $a$ and $w$ we get that $\Nm '_0(M) =\{tc\}_{t\in \FF _q}$. Now assume that $q$ is odd. In this case $w\notin \FF _q$, because $w^{q-1} =-1\ne 1$
(Remark \ref{100}). Take $w_1\in \FF _{q^2}$ with $w_1^{q-1} =-1$ (Remark \ref{+01}).
Since $(w/w_1)^{q-1} =1$, we have $w/w_1\in \FF _q^\ast$. Hence varying $w$ with $w^{q-1}=1$ and $a^{q+1}$ with $a\in \FF _{q^2}^\ast$ we get
exactly $q-1$ elements of $\FF _{q^2}^\ast$, all of them of the form $\{to\}_{t\in \FF _q^\ast}$ with $o=wc$. 
\end{proof}

\begin{proposition}\label{ab7.2}
Take $n=2$ and assume $m_{21}\ne 0$ and $m_{12}\ne 0$. Then:

\quad (i) $\sharp (\Nm '_0(M)) \ge \lceil (q+1)/2\rceil$;

\quad (ii) If $(-m_{12}/m_{21})^{q+1} \ne 1$, then $\sharp (\Nm '_0(M)) \ge q+1$.
\end{proposition}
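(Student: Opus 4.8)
The plan is to parametrize $C'_2(0)$ and reduce the whole computation to a single rational function of one variable over $\FF_{q^2}$. First I would note that if $u=(u_1,u_2)\in C'_2(0)$ then $u_1^{q+1}+u_2^{q+1}=0$ forces both $u_1\ne 0$ and $u_2\ne 0$, so I may set $z:=u_2/u_1$; the relation $u_1^{q+1}(1+z^{q+1})=0$ shows $z\in\Theta$ (Lemma \ref{ab7.3}). A direct expansion gives $\langle u,Mu\rangle=u_1^{q+1}f(z)$, where $f(z):=c+m_{12}z+m_{21}z^{q}$ and $c:=m_{11}-m_{22}$. As $u_1$ runs through $\FF_{q^2}^{\ast}$ the factor $u_1^{q+1}$ runs through all of $\FF_q^{\ast}$ (Remark \ref{+01}) while $z$ runs freely through $\Theta$, so $\Nm '_0(M)=\bigcup_{z\in\Theta}\FF_q^{\ast}f(z)$. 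This exhibits $\Nm '_0(M)$ as a union of $\FF_q^{\ast}$-orbits: each nonzero value contributes an orbit of exactly $q-1$ elements, and $0$ contributes the orbit $\{0\}$. Writing $m$ for the number of distinct nonzero orbits and $\delta\in\{0,1\}$ for whether some $f(z)=0$, I obtain the exact formula $\sharp(\Nm '_0(M))=m(q-1)+\delta$.

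The key simplification is that on $\Theta$ one has $z^{q}=-z^{-1}$, so $f(z)=P(z)/z$ with $P(z):=m_{12}z^{2}+cz-m_{21}$, and likewise $f(z)^{q}=\tilde P(z)/z$ with $\tilde P(z):=m_{21}^{q}z^{2}+c^{q}z-m_{12}^{q}$. Hence the orbit of a nonzero value $f(z)$ is recorded by $R(z):=f(z)^{q-1}=\tilde P(z)/P(z)$, a rational function of degree $\le 2$ (I extend $R$ by $\infty$ at the zeros of $f$, which are exactly the zeros of $P$). Two points $z,z'\in\Theta$ give the same orbit iff $R(z)=R(z')$, so $m$ equals the number of \emph{finite} values of $R$ on $\Theta$ and $\delta$ records whether $\infty$ is attained. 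When $R$ is non-constant its fibres have at most two points, so it takes at least $\lceil(q+1)/2\rceil$ distinct values on the $q+1$ points of $\Theta$; when $R$ is constant, all nonzero values of $f$ coincide, but $f$ itself is never constant on $\Theta$ (that would make $m_{12}z^{2}+(c-d)z-m_{21}$ vanish at the $q+1\ge 3$ points of $\Theta$, forcing $m_{12}=0$).

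For part (i) I would simply assemble these facts. If $R$ is non-constant it attains at least $\lceil(q+1)/2\rceil$ values in $\PP^1$; at most one of them is $\infty$, and a short case check on $\delta\in\{0,1\}$ turns $\sharp(\Nm '_0(M))=m(q-1)+\delta$ into the bound $\ge\lceil(q+1)/2\rceil$ for every $q\ge 2$. If $R$ is constant then $m=1$ (since $f\not\equiv 0$ on $\Theta$, as $P$ has at most two roots), giving $\sharp\ge q-1$, which already exceeds $\lceil(q+1)/2\rceil$ for $q\ge 3$; for $q=2$ the non-constancy of $f$ forces a zero value, hence $\delta=1$ and $\sharp\ge 2=\lceil 3/2\rceil$.

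For part (ii) the hypothesis $(-m_{12}/m_{21})^{q+1}\ne 1$ is, since $(-1)^{q+1}=1$, the same as $m_{12}^{q+1}\ne m_{21}^{q+1}$; this is exactly the condition that $P$ and $\tilde P$ are not proportional, i.e.\ that $R$ is non-constant. The decisive extra input is that now $f$ cannot vanish at two points of $\Theta$: if $z_1,z_2\in\Theta$ (possibly equal) were both roots of $P$, then their product $z_1z_2=-m_{21}/m_{12}$ would satisfy $(z_1z_2)^{q+1}=(-1)(-1)=1$, forcing $(m_{21}/m_{12})^{q+1}=1$, against the hypothesis. Thus $f$ vanishes on at most one point of $\Theta$, so the finite values come from at least $q$ points and $m\ge\lceil q/2\rceil$ (and $m\ge\lceil(q+1)/2\rceil$ if $\delta=0$); feeding this into $m(q-1)+\delta$ yields $\sharp(\Nm '_0(M))\ge q+1$ for all $q\ge 3$. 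Finally $q=2$ is vacuous here, because every nonzero element of $\FF_4$ has $(q+1)$-st power $1$, so the hypothesis is never met. The main obstacle I anticipate is precisely this last numerical push in (ii): the crude ``at most two-to-one'' estimate is \emph{not} by itself enough to reach $q+1$ for small $q$, and the argument only closes once one observes that the hypothesis forbids $f$ from having two zeros on $\Theta$ and disposes of $q=2$ separately.
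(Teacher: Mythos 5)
Your proposal is correct, but it follows a genuinely different route from the paper's. The paper kills the scaling at the outset: after reducing to $m_{11}=0$ it considers only vectors $u=ae_1+e_2$ with $a\in \Theta$, and lower-bounds $\sharp (\Nm '_0(M))$ by the number of distinct values of the single polynomial $g(t)=m_{21}t^q+m_{12}t+m_{22}$ on $\Theta$. The at-most-two-to-one property of $g$ on $\Theta$ comes from the explicit auxiliary quadratic $h_z(t)=m_{12}zt^2-m_{12}z^2t+m_{21}t-m_{21}z$, whose roots are $z$ and the only possible partner $w$; and part (ii) is immediate because a coincidence $g(z)=g(w)$ with $z\ne w$ forces, by the product of the roots of $h_z$, the relation $m_{12}zw=-m_{21}$, whence $(zw)^{q+1}=1$ gives $(-m_{12}/m_{21})^{q+1}=1$ --- so under the hypothesis $g$ is injective on $\Theta$ and one gets $q+1$ values outright, with no case analysis. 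You instead keep the full $\FF _q^\ast$-scaling, exhibit $\Nm '_0(M)$ as a union of $\FF _q^\ast$-orbits, and count orbits through $R=f^{q-1}=\tilde P/P$. This buys more: the exact formula $\sharp (\Nm '_0(M))=m(q-1)+\delta$, and in case (ii) bounds far exceeding $q+1$ once $q\ge 4$; the price is the fiddlier endgame, which you handled correctly --- the split on $\delta$, the constant-$R$ case (where your observation that $f$ cannot be constant on $\Theta$ is exactly what rescues $q=2$ in part (i)), and the vacuity of (ii) at $q=2$, which is genuinely needed since the crude two-to-one estimate alone does not reach $q+1$ there. Amusingly, the same Vieta-plus-$(zw)^{q+1}=1$ trick occurs in both arguments, applied to different quadratics: the paper applies it to $h_z$ to get injectivity of $g$, you apply it to $P$ to show $f$ has at most one zero on $\Theta$.

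One small overstatement to repair, though it does not affect validity: the hypothesis $(-m_{12}/m_{21})^{q+1}\ne 1$, i.e. $m_{12}^{q+1}\ne m_{21}^{q+1}$, is not \emph{exactly} equivalent to non-proportionality of $P$ and $\tilde P$ --- proportionality $\tilde P=\lambda P$ also constrains the middle coefficients via $c^q=\lambda c$, so one can have $m_{12}^{q+1}=m_{21}^{q+1}$ with $P$ and $\tilde P$ still non-proportional. The implication you actually use (the hypothesis implies non-proportionality, hence $R$ non-constant, by comparing leading and constant coefficients) is the valid direction, so replace ``is exactly the condition'' by ``implies''.
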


\begin{proof}
 Using $M-m_{11}\II _{2\times 2}$ instead of $M$ we reduce to the case $m_{11}=0$ (Remark \ref{ab2}). Take $u = ae_1+be_2$. We have $\langle u,u\rangle = a^{q+1}+b^{q+1}$, $Mu = bm_{21}e_1 +(am_{12}+m_{22}b)e_2$ and $\langle u,Mu\rangle
= a^qbm_{21} + b^q(am_{12}+m_{22}b) = a^qbm_{21} +b^qam_{12} + m_{22}b^{q+1}$. We take only the solutions obtained taking $b=1$ and so $a\in \Theta$, where
$\Theta$ is as in Lemma \ref{ab7.3}. To get the lemma we study the number of different values of the restriction to $\Theta$ of the polynomial $g(t) = m_{21}t^q + m_{12}t+m_{22}$.
This number is the number of different values of the restriction to $\Theta$ of the polynomial $f(t) = m_{21}t^q + m_{12}t$. Fix $z, w\in \Theta$ and assume $f(z)=f(w)$.
Hence $f(z)zw = f(w)zw$. Since $z^{q+1} = w^{q+1} = -1$, we get $-m_{21}w+ m_{12}z^2w = -m_{21}z+m_{12}zw^2$. Set $h_z(t) = m_{12}zt^2-m_{12}z^2t+ m_{21}t -m_{21}z$.
The polynomial $h_z(t)$ has at most two zeroes in $\FF _{q^2}$, one of them being $z$. Hence for each $z\in \Theta$ there is at most one $w\in \Theta$ with $w\ne z$
and $g(w) =g(z)$. Thus $\sharp (\Nm '_0(M)) \ge \lceil (q+1)/2\rceil$. Assume the existence of $w\ne z$ with $h_z(w) =0$. Since $z$ and $w$ are the two roots of $h_z(t)$,
we have $m_{12}z^2w = -m_{21}z$, i.e. (since $z\ne 0$, $m_{12}zw = -m_{21}$. Since $(zw)^{q+1} = 1$ and $(-1)^{q+1} =1$ (even if $q$ is even), we get part (ii).
\end{proof}

\begin{proof}[Proof of Corollary \ref{ab6}:]
It is sufficient to the case $n=2$. Using $M-m_{11}\II _{2\times 2}$ we reduce to the case $m_{11}=0$ (Remark \ref{ab2}). If $m_{21}=0$, then we use either Lemma \ref{ab3} (if $M$ is unitarily equivalent to a diagonal matrix)
or Lemma \ref{ab5} (if $M$ is diagonalizable, but not unitarily diagonalizable) or Lemma \ref{ab4} (if $0$ is the unique eigenvalue of $M$ with $e_1$ as its eigenvalue).
If $m_{21} =0$ we apply the last sentence to $M^{\dagger}$ and use Lemma \ref{aab2}.  Hence we may assume that $m_{12}m_{21} \ne 0$. Apply Proposition \ref{ab7.2}.\end{proof}

\section{Matrices with coefficients in $\FF_q$}\label{Sq}

We always assume $n\ge 2$. We assume $M =(m_{ij})$ with $m_{ij}\in \FF _q$ for all $i, j$.
Take $k\in \FF _q$ and $u \in \FF _q^n$  with $\langle u,u\rangle = k$ and write $u = \sum _{i=1}^{n} x_ie_i$ with $x_i\in \FF _q$ for all $i$. Since $x_i\in \FF_q$, we have
$x_i^{q+1} =x_i^2$ and so the condition $\langle u,u\rangle = k$ is equivalent to the degree $2$ equation
\begin{equation}\label{eqq1}
\sum _{i=1}^{n} x_i^2 = k
\end{equation}
Since $x_i^q =x_i$ for all $i$, the condition $\langle u,Mu\rangle =a$ is equivalent to
\begin{equation}\label{eqq2}
\sum _{i,j=1}^{n} m_{ij} x_ix_ j =a
\end{equation}

\begin{remark}\label{00+a+1}
Fix any $k\in \FF _q$, any integer $n\ge 2$ and any $n\times n$ matrix $M$ with coefficients in $\FF_q$. Every element of $\FF _q$ is a sum of two squares
of elements of $\FF _q$ (Remark \ref{0ab1}). Hence (\ref{eqq1}) has always a solution $(y_1,\dots ,y_n)\in \FF_q^n$. Setting $x_i:= y_i$ in the left hand side of (\ref{eqq2}) we get $\Nm _k(M)_q\ne \emptyset$. However, there are a few cases with $\Nm '_0(M)_q=\emptyset$ (Proposition \ref{qq3}). We always have $\Nm '_0(M)_q\ne \emptyset$ if $q$ is even (Proposition
\ref{qq4}).
\end{remark}

Set $\Bb _n:= \{u\in \FF_q^n\mid \langle u,u\rangle =0\}$. Let $\nu '_M: \Bb _n\to \FF _q$ be the map defined by the formula $\nu '_M(u) =\langle u,Mu\rangle$.

\begin{remark}\label{qq1}
Take another $n\times n$ matrix $N =(n_{ij})$ with coefficient in $\FF _q$, with $n_{ii} =m_{ii}$ for all $i$ and $n_{ij}+n_{ji} = m_{ij}+m_{ji}$ for all $i\ne j$. The systems given by
(\ref{eqq1}) and (\ref{eqq2}) for $M$ and for $N$ are the same and hence $\Nm _k(M)_q = \Nm _k(N)_q$ for all $k$ and $\Nm '_0(M)_q = \Nm '_0(N)_q$. As a matrix $N$
we may always take a triangular matrix. If $q$ is odd (i.e. if we may divide by $2$ in our fields $\FF_q$ and $\FF _{q^2}$), then we may take as $N$ a symmetric matrix. Take
any symmetric matrix $N$ with coefficients in $\FF _q$. Since the coefficients of $N$ are
in $\FF _q$ (i.e. $n_{hk}^q =n_{hk}$ for all $h, k$) and $N$ is symmetric, then $N^{\dagger} = N$ and so $\Nm '_0(N)$ is invariant by the Frobenius map $t\mapsto t^q$.
$\Nm '_0(N)_q$ is the set of all fixed points for the Frobenius action on $\Nm '_0(N)$.\end{remark}

\begin{remark}\label{qq2}
For all $c, d\in \FF _q$ we have $\Nm  _0(c\II _{n\times n}+dM)_q = d\Nm '_0( M)_q$ and $\Nm _k(c\II _{n\times n}+dM)_q = ck^2 + d\Nm _k(M)_q$.
\end{remark}

\begin{remark}\label{0ab2}
Fix $k, b \in \FF _q^\ast$, $a\in \FF _q$, and assume the existence of $d\in \FF _q^\ast$ such that $b =kd^2$. The map $(x_1,\dots ,x_n)\to (dx_1,\dots ,dx_n)$
shows that the system given by (\ref{eqq1}) and (\ref{eqq2}) has a solution if and only the system gives by (\ref{eqq1}) and (\ref{eqq2}) with $b$ instead of $k$ and
$ad^2$ instead of $a$ has a solution. Hence $\sharp (\Nm _k(M)_q) = \sharp (\Nm _b(M)_q)$. If $q$ is even, for all $k, b \in \FF _q^\ast$, $a\in \FF _q$ there is $d\in \FF _q^\ast$ such that $b =kd^2$ (Remark \ref{0ab1}). Hence if $q$ is even, then $\sharp (\Nm _k(M)_q) = \sharp (\Nm _1(M)_q)$ for all $k\in \FF _q^\ast$.
Now assume $q$ odd. The multiplication group $\FF _q^\ast$ is cyclic of order $q-1$ (Remark \ref{100}). Since $q-1$ is even, the group $\FF _q^\ast /(\FF _q^\ast )^2$ has cardinality $2$
and hence to know all integers $\sharp (\Nm _k(M)_q)$, $k\in \FF_q^\ast$, it is sufficient to know it for one $k$, which is a square in $\FF _q^\ast$ (e.g. for $k=1$) and
for one $k$, which is not a square in $\FF _q^\ast$.
\end{remark}

\quad (a) Assume that $q$ is even. For any $k\in \FF _q$ there is a unique $c\in \FF _q$ with $c^2=k$ (Remark \ref{0ab1}). Hence (\ref{eqq1}) is equivalent to $(\sum _{i=1}^{n} x_i +c)^2 =0$,
i.e. to 
\begin{equation}\label{eqq3}
\sum _{i=1}^{n} x_i =c.
\end{equation}
Hence the system given by (\ref{eqq1}) and (\ref{eqq2}) is equivalent to the system given by (\ref{eqq2}) and (\ref{eqq3}). Writing $x_n =\sum_{i=1}^{n-1} x_i+c$ we translate
the system given by (\ref{eqq2}) and (\ref{eqq3}) into a degree $2$ polynomial in $x_1,\dots ,x_{n-1}$. If $k=a=0$, then this is a homogeneous polynomial of degree $2$ in $n-1$ variables and hence it has a non-trivial solution
if $n-1\ge 3$ (\cite[Corollary 1]{h}, \cite[Theorem 3.1]{s}), proving the following result.

\begin{corollary}\label{0abq3}
If $M$ has coefficients in $\FF_q$, $q$ is even and $n\ge 4$, then $0\in \Nm '_0(M)_q$.
\end{corollary}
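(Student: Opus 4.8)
The plan is to unwind the definition of $0\in\Nm'_0(M)_q$ and reduce the problem to finding a nontrivial zero of a single homogeneous quadratic form, which the Chevalley--Warning theorem will then supply. By definition, $0\in\Nm'_0(M)_q$ means that the system consisting of (\ref{eqq1}) with $k=0$ and (\ref{eqq2}) with $a=0$ admits a solution $(x_1,\dots,x_n)\in\FF_q^n$ with $(x_1,\dots,x_n)\ne(0,\dots,0)$. With these choices both equations are homogeneous: (\ref{eqq1}) reads $\sum_{i=1}^n x_i^2=0$ and (\ref{eqq2}) reads $\sum_{i,j=1}^n m_{ij}x_ix_j=0$.

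The next step is the characteristic-two simplification already recorded in part (a). Since $q$ is even and $k=0$, the unique $c$ with $c^2=0$ is $c=0$ (Remark \ref{0ab1}), so (\ref{eqq1}) collapses to the linear equation (\ref{eqq3}) with $c=0$, namely $\sum_{i=1}^n x_i=0$. I would use this to eliminate one variable, writing $x_n=\sum_{i=1}^{n-1}x_i$ (in characteristic $2$ signs are irrelevant) and substituting into (\ref{eqq2}). Because $a=0$, the result is a single homogeneous polynomial $Q(x_1,\dots,x_{n-1})$ of degree $2$ in the $n-1$ variables $x_1,\dots,x_{n-1}$, and any zero of $Q$ yields a solution of the original system with $x_n$ determined by the linear relation.

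Finally I would invoke Chevalley--Warning in the form \cite[Corollary 1]{h}, \cite[Theorem 3.1]{s}: a homogeneous polynomial of degree $d$ over $\FF_q$ in $N$ variables with $N>d$ has a nontrivial zero. Here $d=2$ and $N=n-1\ge 3$, since $n\ge 4$, so $Q$ has a zero $(x_1,\dots,x_{n-1})\ne 0$. Setting $x_n=\sum_{i=1}^{n-1}x_i$ produces $u=(x_1,\dots,x_n)\in\FF_q^n$, which is nonzero because its first $n-1$ coordinates are, and which satisfies $\langle u,u\rangle=0$ and $\langle u,Mu\rangle=0$; hence $0\in\Nm'_0(M)_q$.

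The only point requiring care, and the main obstacle, is ensuring that the solution produced is genuinely nonzero rather than the forbidden origin. Homogeneity is exactly what guarantees this: the Chevalley--Warning bound $N>\deg$ forces the number of zeros to be divisible by the characteristic, and since the origin is already a zero there must be at least one more. Because the eliminated coordinate $x_n$ is a linear function of the others, a nonzero $(x_1,\dots,x_{n-1})$ automatically gives a nonzero $u$. The dimension bookkeeping---that eliminating one variable still leaves $n-1\ge 3$ variables strictly exceeding the degree $2$---is what makes the hypothesis $n\ge 4$ the sharp threshold for this argument.
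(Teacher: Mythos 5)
Your proof is correct and follows essentially the same route as the paper: the paper also uses the characteristic-two collapse of (\ref{eqq1}) to the linear equation (\ref{eqq3}) with $c=0$, eliminates $x_n$ to obtain a homogeneous quadratic in $n-1\ge 3$ variables, and invokes Chevalley--Warning (\cite[Corollary 1]{h}, \cite[Theorem 3.1]{s}) to produce a nontrivial zero. Your added remark that a nonzero $(x_1,\dots,x_{n-1})$ forces $u\ne 0$ makes explicit a point the paper leaves implicit, but the argument is the same.
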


If $k$ and/or $a$ are arbitrary the system  given by (\ref{eqq2}) and (\ref{eqq3}) is equivalent to find a solution in $\FF _q^{n-1}$ of a certain polynomial in $ \FF _q[x_1,\dots ,x_{n-1}]$ with degree at most $2$. We only fix
$c\in \FF _q$, but not $a$. Call
$f(x_1,\dots ,x_{n-1})$ the left hand side of (\ref{eqq2}) obtaining substituting $x_n = -x_1-\cdots -x_n +c$. $\Nm _k(M)_q$ is described by the image of the map $\FF _q^{n-1} \to \FF_q$
associated to the polynomial $f(x_1,\dots ,x_{n-1})$ with $\deg (f)\le 2$. We claim that if $f$ is not a constant polynomial, then the image of $f$ has cardinality at least $q/2$. Indeed,
if $\deg (f)=1$, then $f$ induces a surjective map $\FF _q^{n-1}\to \FF _q$. Now assume $\deg (f) =2$. For any map $h: \FF _q\to \FF _q$ induced by a degree $2$ polynomial
a fiber of $h$ has cardinality at most $2$. Hence $\sharp (h(\FF _q))\ge q/2$. Hence $\sharp (f(\FF _q^{n-1}) \ge q/2$. See part (ii) of Proposition \ref{qq3} for a case with $f\equiv 0$, $\Nm _0(M)_q= \{0\}$ and $\Nm '_0(M)_q=\emptyset$.

\quad (b) Assume that $q$ is odd. Taking $a=k=0$, we get that (\ref{eqq1}) and (\ref{eqq2}) are a system of two degree $2$ homogeneous equations. Chevalley-Warning theorem
(\cite[Theorem 3.1]{s}) gives the following corollary.

\begin{corollary}\label{0abq2}
If $M$ has coefficients in $\FF_q$, $q$ is odd and $n\ge 5$, then $0\in \Nm '_0(M)_q$.
\end{corollary}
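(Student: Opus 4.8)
The plan is to invoke the Chevalley--Warning theorem directly, exactly as the preceding paragraph sets up. Recall that the assertion $0\in \Nm '_0(M)_q$ means precisely that the system consisting of (\ref{eqq1}) with $k=0$ together with (\ref{eqq2}) with $a=0$ admits a \emph{nonzero} solution $u=\sum _{i} x_ie_i\in \FF _q^n$. Both of these are homogeneous polynomial equations of degree exactly $2$ over $\FF _q$: the first is the quadratic form $\sum _{i=1}^{n} x_i^2$ (this is where the reduction $\langle u,u\rangle =\sum x_i^2$ for $u\in \FF _q^n$ recorded in (\ref{eqq1}) is used, so that no $q$-th powers survive), and the second is the quadratic form $\sum _{i,j} m_{ij}x_ix_j$ attached to $M$.

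First I would record that the sum of the degrees of these two forms is $2+2=4$, which is strictly less than $n$ precisely when $n\ge 5$; this is the only place the hypothesis $n\ge 5$ enters. I would then apply \cite[Theorem 3.1]{s}: since the sum of the degrees is $<n$, the number $N$ of common zeros in $\FF _q^n$ of the two forms satisfies $N\equiv 0\pmod p$. Finally, two homogeneous forms always vanish at the origin, so $N\ge 1$, whence $N\equiv 0\pmod p$ forces $N\ge p\ge 3$. In particular there is a common zero $u\ne 0$, and by construction such a $u$ satisfies $\langle u,u\rangle =0$ and $\langle u,Mu\rangle =0$, which is exactly the statement $0\in \Nm '_0(M)_q$.

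I do not anticipate a genuine obstacle here: the whole content is the degree count $4<n$ combined with the homogeneity that guarantees the trivial solution, so that the congruence $N\equiv 0\pmod p$ upgrades it to a nontrivial one. The only point I would take care to state explicitly is that both equations are truly homogeneous of degree $2$ in the $\FF _q$-variables $x_1,\dots ,x_n$, since this is what legitimizes reading (\ref{eqq1})--(\ref{eqq2}) (with $k=a=0$) as a pair of quadratic forms in the sense required by Chevalley--Warning.
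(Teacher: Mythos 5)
Your proof is correct and follows exactly the paper's route: reduce (\ref{eqq1}) and (\ref{eqq2}) with $k=a=0$ to a pair of quadratic forms in $x_1,\dots ,x_n$ over $\FF _q$ and apply the Chevalley--Warning theorem \cite[Theorem 3.1]{s}, using $2+2=4<n$ and the trivial zero to force a nontrivial common zero. Your write-up is in fact more detailed than the paper's one-line argument, and the only mild overkill is the step $N\ge p\ge 3$, where $N\ge p\ge 2$ already suffices.
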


The left hand side of (\ref{eqq1}) is a non-degenerate quadratic form $\beta\in \FF_q[x_1,\dots ,x_n]$. If $n =2s$ $\beta$ is characterized in \cite[Table 5.1]{h} with
$m= n$ (because all the coefficients, $1$, appearing on the left hand side of (\ref{eqq1}) are squares in $\FF _q$): it is a hyperbolic quadric if either $s$ is even
or
$q \equiv 1\pmod{4}$ and $s$ is odd, while it is elliptic if $s$ is odd and $q\equiv -1\pmod{4}$.

Now we consider the case $n=2$ for an arbitrary $q$.

\begin{proposition}\label{qq3}
Assume $n=2$ and let $N=(n_{ij})$ be the $2\times 2$-matrix with $n_{11}=m_{11}$, $n_{22} = m_{22}$, $n_{21}=0$ and $n_{12} = m_{12}+m_{21}$.
We have $\Nm '_0(M)_q = \Nm '_0(N)_q$ and $\Nm _k(M)_q = \Nm  _k(N)_q$ for all $k\in \FF _q$. 

\quad (i) If $q\equiv -1\pmod{4}$, then $\Nm '_0(M)_q=\emptyset$. 

\quad (ii) Assume that $q$ is even. If $m_{22}+m_{12}+m_{21} +m_{11} \ne 0$, then $\Nm '_0(M)_q =\FF _q^\ast$ and $\sharp (\Nm _k(M)_q) \ge q/2$ for all $k\in \FF _q^\ast$. If $m_{22}+m_{12}+m_{21}+m_{11}=0$,
then $\Nm '_0(M)_q =\{0\}$ and for any fixed $k\in \FF _q^\ast$ either $\Nm _k(M) =\FF _q$ or $\sharp (\Nm _k(M)_q)=1$. If $m_{12}+m_{21}=0$ and $m_{11} \ne m_{22}$,
then $\Nm _k(M)_q =\FF _q$ for all $k\in \FF _q^\ast$.

\quad (iii) Assume $q\equiv 1 \pmod{4}$. 

\quad (iii1) If $m_{12}+m_{21}\ne 0$, then $\Nm _0(M)_q$ contains at least $(q-1)/2$ elements of $\FF _q^\ast$.

\quad (iii2) Assume $m_{12}+m_{21} =0$. If $m_{11} =m_{22}$, then $\Nm _k(M)_q =\{km_{11}\}$ for all $k\in \FF _q$ and $0\in \Nm '_0(M)_q$. If $m_{11}\ne m_{22}$,
then $\sharp (\Nm _k(M)_q)\le (q+1)/2$ for all $k\in \FF_q$, $\sharp (\Nm _0(M)_q) =(q+1)/2$ and $\sharp (\Nm '_0(M)_q) =(q-1)/2$.
\end{proposition}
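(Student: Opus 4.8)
The opening identity $\Nm'_0(M)_q=\Nm'_0(N)_q$, $\Nm_k(M)_q=\Nm_k(N)_q$ is exactly Remark \ref{qq1}: the systems (\ref{eqq1})--(\ref{eqq2}) see $M$ only through $m_{11}$, $m_{22}$ and the sum $m_{12}+m_{21}$. I would therefore abbreviate $s:=m_{12}+m_{21}$ and, for $u=x_1e_1+x_2e_2$ with $x_i\in\FF_q$, work with the two forms $Q:=x_1^2+x_2^2=\langle u,u\rangle$ and $B:=m_{11}x_1^2+s\,x_1x_2+m_{22}x_2^2=\langle u,Mu\rangle$. Each claim then becomes a statement about the values of $B$ on $\{Q=k\}$ (with $\Nm'_0$ the values on $\{Q=0\}\setminus\{0\}$).

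\textbf{Parts (i) and (ii).} Part (i) is immediate: for $q\equiv-1\pmod4$ the element $-1$ is a non-square (Remarks \ref{0ab1}, \ref{100}), so $Q=0$ forces $x_1=x_2=0$, whence $\Bb_2=\{0\}$ and $\Nm'_0(M)_q=\emptyset$. For (ii) I would complete the square: let $c$ be the unique square root of $k$, so $Q=k$ is equivalent to $x_1+x_2=c$; substituting $x_2=x_1+c$ turns $B$ into $f(x_1)=(m_{11}+s+m_{22})x_1^2+sc\,x_1+m_{22}c^2$, and $\Nm_k(M)_q=f(\FF_q)$. Since a degree-$\le2$ polynomial map $\FF_q\to\FF_q$ has fibres of size $\le2$, a nonconstant $f$ has $\sharp f(\FF_q)\ge q/2$. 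I split on the leading coefficient $m_{11}+m_{12}+m_{21}+m_{22}$: when it is nonzero $f$ has degree $2$, giving the bound $\ge q/2$ for $k\ne0$, and for $k=0$ (then $x_1=x_2$ and $B=(m_{11}+s+m_{22})x_1^2$) the bijectivity of squaring in characteristic $2$ gives $\Nm'_0(M)_q=\FF_q^\ast$; when it vanishes, $f$ is linear ($s\ne0$, image $\FF_q$) or constant ($s=0$, one value), and $\Nm'_0(M)_q=\{0\}$. The final clause ($s=0$, $m_{11}\ne m_{22}$) lies in the nonvanishing case with $f=(m_{11}+m_{22})x_1^2+m_{22}c^2$, so $f(\FF_q)=\FF_q$ by surjectivity of squaring.

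\textbf{Part (iii), the main case.} Now $q\equiv1\pmod4$, so $-1$ is a square; fix $i\in\FF_q$ with $i^2=-1$. For (iii1) the null cone is $\{x_1=\pm i\,x_2\}$, on which $B=\lambda_\pm x_2^2$ with $\lambda_\pm=(m_{22}-m_{11})\pm s\,i\in\FF_q$; since $\lambda_+-\lambda_-=2si\ne0$ at least one $\lambda_\pm$ is nonzero, and as $x_2^2$ ranges over the $(q-1)/2$ nonzero squares we obtain $(q-1)/2$ distinct nonzero values, all in $\Nm_0(M)_q\cap\FF_q^\ast$. For (iii2) set $s=0$, so $B=m_{11}x_1^2+m_{22}x_2^2$. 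If $m_{11}=m_{22}$ then $B=m_{11}Q$, hence $\Nm_k(M)_q=\{km_{11}\}$ (solutions exist by Remark \ref{00+a+1}), and $u=ie_1+e_2$ lies on the null cone and is nonzero, so $0\in\Nm'_0(M)_q$. If $m_{11}\ne m_{22}$, write $X=x_1^2$, $Y=x_2^2$, both ranging over the set $S$ of squares with $\sharp S=(q+1)/2$; the invertible linear system $X+Y=k$, $m_{11}X+m_{22}Y=a$ makes $a\mapsto X$ an affine bijection, so $\sharp\Nm_k(M)_q=\sharp\{X\in S:\ k-X\in S\}\le\sharp S=(q+1)/2$, the asserted upper bound. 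For $k=0$ the condition $-X\in S$ is automatic because $-S=S$ (as $-1$ is a square), so the count is $\sharp S=(q+1)/2$; finally $a=0$ forces $X=Y=0$, i.e.\ $u=0$, so $0\notin\Nm'_0(M)_q$ and $\sharp\Nm'_0(M)_q=(q-1)/2$.

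\textbf{Expected difficulty.} Parts (i), (ii) and (iii1) are essentially bookkeeping. The crux is (iii2): both the equality $\sharp\{X\in S:\,-X\in S\}=\sharp S$ (via $-S=S$) and the removal of exactly one value in passing from $\Nm_0$ to $\Nm'_0$ (via ``$a=0\Leftrightarrow u=0$'') rely on $-1$ being a square, while the clean identity $\sharp\Nm_k(M)_q=\sharp\{X\in S:k-X\in S\}$ needs $m_{11}\ne m_{22}$ so that $a\mapsto X$ is bijective.
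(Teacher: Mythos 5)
Your proposal is correct and takes essentially the same route as the paper: reduction to $N$ via Remark \ref{qq1}, the non-square $-1$ argument for (i), the substitution $x_2=x_1+c$ with a fibre count for the degree-$\le 2$ polynomial $f$ in (ii), and the parametrization $x_1=\pm e\,x_2$ of the null cone plus square-counting in (iii). Your linear-system formulation in $(X,Y)=(x_1^2,x_2^2)$ for (iii2) is merely a repackaging of the paper's step of subtracting $m_{11}$ times (\ref{eqq1}) from (\ref{eqq2}) to get $(m_{22}-m_{11})x_2^2=a-km_{11}$, with the same reason for the inequality $\le (q+1)/2$ and the same use of $-S=S$ at $k=0$.
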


\begin{proof}
We have $\Nm _k(N)_q =\Nm _k(M)_q$ and $\Nm '_0(N)_q =\Nm "_0(M)_q$ by Remark \ref{0ab2}.

Take $u = x_1e_1+x_2e_2$ with $\langle u,u\rangle = k$ and $\langle u,Mu\rangle =a$. Hence we get the system given by (\ref{eqq1}) and (\ref{eqq2}). If $q$ is even, then instead
of (\ref{eqq1}) we may use (\ref{eqq3}) with $c^2=k$.

\quad (a) Assume for the moment $q\equiv -1 \pmod{4}$. Thus $q$ is odd and $(-1)^{(q-1)/2} =-1$ in $\ZZ$. Since $\FF _q^\ast$ is a cyclic group of order $q-1$, we get that $-1$ is not a square in $\FF _q^\ast$. Hence (\ref{eqq1}) for $k=0$ has only the solution $x_1=x_2=0$.

\quad (b) Now assume that $q$ is even. Take $k=0$ in (\ref{eqq3}). We have $x_1+x_2=0$ if and only if $x_1=x_2$. When $x_1=x_2$, (\ref{eqq2}) is equivalent to
$(m_{22}+m_{12}+m_{21} +m_{11})x_1^2 =a$. If $m_{22}+m_{12}+m_{21} +m_{11}=0$, then we get $a=0$ and so $\Nm _0(M) =\{0\}$; taking $x_1=x_2=1$ we get $\Nm '_0(M)=\{0\}$. Now assume $m_{22}+m_{12}+m_{21} +m_{11}\ne 0$. If $a=0$, we get $x_1=0$ and so $x_2=0$ and hence $0\notin \Nm '_0(M)$. Now assume $a\ne 0$. There is a unique $b\in \FF _q^\ast$ such that $b^2=a/(m_{22}+m_{12}+m_{21} +m_{11})$ (Remark \ref{0ab1}). Taking $x_1=x_2=b$ we get $a\in \Nm '_0(M)$. 

Now we fix $k\in \FF _q^\ast$ and
write $c^2=k$ with $c\in \FF _q^\ast$ (Remark \ref{0ab1}). We have $x_2=x_1+c$ by (\ref{eqq3}). Substituting this equation in (\ref{eqq2}) we get an equation $f(x_1) =a$
with $\deg (f)\le 2$. The coefficient of $x_1^2$ in $f$ is $m_{11}+m_{12}+m_{22}+m_{21}$. If $m_{11}+m_{12}+m_{22}+m_{21}\ne 0 $, then $\sharp (f(\FF _q)) \ge q/2$, because $\sharp (f^{-1}(t)) \le 2$
for all $t\in \FF _q$. If $m_{11}+m_{12}+m_{22}+m_{21}=0$, then either $f$ has degree $1$ and so it induces a bijection $\FF _q\to \FF _q$ or it is a constant, $\alpha$ (we allow the case $\alpha =0$)
and hence $\Nm _k(M)_q=\{\alpha\}$. Now assume $m_{12}+m_{21}=0$ and $m_{11}\ne m_{22}$. Take $k=c^2$. Substituting (\ref{eqq3}), i.e. $x_2=x_1+c$ in (\ref{eqq2}) 
we get $(m_{11}+m_{22})x_1^2 + c(m_{11}+m_{22}) = a$. Since $m_{11}+m_{22}\ne 0$ and every element of $\FF _q$ is  square (Remark \ref{0ab1}), we get $\Nm _k(M)_q=\FF _q$ for
all $k$.

\quad ({c}) Now assume that $q \equiv 1\pmod{4}$. Since $q\equiv 1\pmod{4}$, then  $(q-1)/2\in \NN$. Since $\FF _q^\ast$ is a cyclic group of order $q-1$, there is $e\in \FF _q^\ast$ with $e^2=-1$. We have $e\ne -e$ and $t^2=-1$ with $t\in \overline{\FF _q}$
if and only if $t\in \{-e,e\}$.  First  take $k=0$ and hence $x_1=tx_2$ with $t^2=-1$, i.e. $t\in \{e,-e\}$. Assume for
the moment $m_{12}+m_{12}\ne 0$. Hence there is $g\in \{e,-e\}$ such that $-m_{11} + g(m_{12}+m_{21}) +m_{22} \ne 0$. Take $x_1 =gx_2$. Since $g^2=-1$, we have $x_1^2+x_2^2=0$
and (\ref{eqq2}) is transformed into $(-m_{11} +g(m_{12}+m_{21}) +m_{22})x_2^2 =a$. Since $(q-1)/2$ elements of $\FF _q^\ast$ are squares (Remark \ref{0ab1})
we get that $\Nm _0(M)_q$ contains at least $(q-1)/2$ elements of $\FF _q^\ast$. 

Now assume $m_{12}+m_{21} =0$. We have $\Nm '_0(M)_q=\Nm _0(N)_q$ and $\Nm _k(M)_q= \Nm _k(N)_q$, where $N =(n_{ij})$ is the diagonal matrix with
$n_{11}=m_{11}$ and $n_{22} =m_{22}$. If $m_{11}=m_{22}$, then $N = m_{11}\II _{2\times 2}$ and hence $\Nm _k(N)_q =\{km_{11}\}$ for all $k\in \FF _q$ and
$0\in \Nm '_0(N)$, because $\nu '(e,1) =0$. Now assume $m_{11} \ne m_{22}$. We fix $k\in \FF_q$, but not $a$. Subtracting $m_{11}$ times (\ref{eqq1}) from (\ref{eqq2}) we get $(m_{22}-m_{11})x_2^2 = a-km_{11}$. 
Since $m_{22}\ne m_{11}$ and $(q+1)/2$ elements of $\FF _q$ are squares, we get that $\sharp (\Nm _k(N)) \le (q+1)/2$ (we only get the inequality $\le $, because for a given $b\in \FF _q$, we are not sure that
the equation $x_1^2 +b^2 =k$ has a solution). If $k=0$, we may always take $x_1=eb$ and so $\sharp (\Nm _0(N)_q) =(q+1)/2$. We have $0\notin \Nm _0'(N)_q$, because
we first get $x_2=0$ and then $x_1=0$.\end{proof}

 The case $k\ne 0$ of step ({c}) of the proof of Proposition \ref{qq3} proves the following observation.

\begin{remark}
Assume $n=2$, $q\equiv 1\pmod{4}$ and $m_{12}+m_{21} =0$.  If $m_{11} =m_{22}$, then $\Nm _k(M)_q =\{km_{11}\}$ for all $k\in \FF _q$. If $m_{11}\ne m_{22}$,
then $\sharp (\Nm _k(M)_q)\le (q+1)/2$ for all $k\in \FF_q^\ast $.
\end{remark}

\begin{corollary}\label{qq5}
Assume $n\ge 2$, $q\equiv 1\pmod{4}$ and fix an $n\times n$-matrix $M= (m_{ij})$ with coefficients in $\FF _q$. 

\quad (i) Assume  $m_{ij}+m_{ji}=0$ for all $i,j$ with $1\le i<j\le n$ and $m_{ii}=m_{11}$ for all $i$. Then $\Nm _k(M)_q= \{km_{11}\}$ for
all $k\in \FF _q$ and $0\in \Nm '_0(M)$.

\quad (ii) If $M$ is not as in (i), then $\Nm _0(M)$ contains  at least $(q-1)/2$ elements of $\FF _q^\ast$.
\end{corollary}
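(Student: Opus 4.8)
The plan is to treat (i) by a direct expansion of the quadratic form and (ii) by restricting to a coordinate $2$-plane and quoting the $n=2$ analysis of Proposition \ref{qq3}; throughout I would work with $\Nm_0(M)_q$ and use $\Nm_0(M)_q\subseteq \Nm_0(M)\cap \FF_q$ to recover the stated conclusions.

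For (i), I would write $u=\sum_i x_ie_i$ with $x_i\in \FF_q$, so that $\langle u,u\rangle =\sum_i x_i^2$ and $\langle u,Mu\rangle =\sum_{i,j}m_{ij}x_ix_j$. Under the hypotheses the off-diagonal terms cancel in pairs, $(m_{ij}+m_{ji})x_ix_j=0$ for $i\ne j$, and the diagonal terms give $\sum_i m_{ii}x_i^2=m_{11}\sum_i x_i^2=m_{11}\langle u,u\rangle$. Hence every $u$ with $\langle u,u\rangle =k$ satisfies $\langle u,Mu\rangle =km_{11}$, and since $\Nm_k(M)_q\ne\emptyset$ (Remark \ref{00+a+1}) this forces $\Nm_k(M)_q=\{km_{11}\}$. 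For the last assertion, $q\equiv 1\pmod 4$ gives $e\in \FF_q^\ast$ with $e^2=-1$ (Remark \ref{100}); then $u=ee_1+e_2$ is nonzero with $\langle u,u\rangle =e^2+1=0$, so $0\in \Nm'_0(M)_q\subseteq \Nm'_0(M)$.

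For (ii) the key reduction is that for any pair $i<j$ the vectors supported on $e_i,e_j$ realize the $2\times 2$ submatrix $M'$ with diagonal entries $m_{ii},m_{jj}$ and off-diagonal entries $m_{ij},m_{ji}$; extending such a vector by zeros preserves both $\langle u,u\rangle$ and $\langle u,Mu\rangle$, whence $\Nm_0(M')_q\subseteq \Nm_0(M)_q$ and $\Nm'_0(M')_q\subseteq \Nm_0(M)_q$. Now I would negate the hypothesis of (i). If some pair has $m_{ij}+m_{ji}\ne 0$, apply part (iii1) of Proposition \ref{qq3} to $M'$ to obtain at least $(q-1)/2$ elements of $\FF_q^\ast$ inside $\Nm_0(M')_q$. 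Otherwise every off-diagonal pair is skew, so the failure of (i) forces an index $i$ with $m_{ii}\ne m_{11}$; the pair $(1,i)$ then has $m_{1i}+m_{i1}=0$ and $m_{11}\ne m_{ii}$, and part (iii2) of Proposition \ref{qq3} gives $\sharp(\Nm'_0(M')_q)=(q-1)/2$ with $0\notin\Nm'_0(M')_q$, i.e. $(q-1)/2$ elements of $\FF_q^\ast$. In either case the inclusion above places these values in $\Nm_0(M)_q$, hence in $\Nm_0(M)$.

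The one thing to be careful about is the bookkeeping in (ii): one must select the correct sub-case of Proposition \ref{qq3}, and in the skew-diagonal case count the $(q-1)/2$ \emph{nonzero} values from $\Nm'_0(M')_q$ rather than from $\Nm_0(M')_q$ (which also contains $0$), then verify that the coordinate-plane restriction sends $\langle u,u\rangle$ and $\langle u,Mu\rangle$ exactly to the corresponding submatrix values so that the count is not lost under the inclusion into $\Nm_0(M)_q$. Everything else is the routine expansion used in (i).
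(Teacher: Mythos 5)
Your proof is correct and follows essentially the same route as the paper: part (i) is the paper's reduction via Remark \ref{qq1} to $m_{11}\II_{n\times n}$ carried out as a direct expansion, with the same witness $(e,1)$, $e^2=-1$, for $0\in\Nm'_0(M)_q\subseteq\Nm'_0(M)$, and part (ii) invokes parts (iii1) and (iii2) of Proposition \ref{qq3} in exactly the paper's case split. Your only addition is to make explicit the extension-by-zeros argument giving $\Nm'_0(M')_q\subseteq\Nm'_0(M)_q$ for a coordinate $2\times 2$ submatrix $M'$, and the observation that in the (iii2) case the $(q-1)/2$ elements of $\Nm'_0(M')_q$ are nonzero --- details the paper's terse proof leaves implicit.
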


\begin{proof}
Let $N$ be the $n\times n$-matrix with $n_{ii}=m_{ii}$ for all $i$, $n_{ij}=0$ for all $i<j$ and $n_{ij} =m_{ij}+m_{ji}$ for all $i<j$. We have  $\Nm _k(M)_q =\Nm _k(N)_q$ and $\Nm '_0(M)_q = \Nm '_0(N)$ by Remark \ref{qq1}. Take $M$ as in part (i). We have $N = m_{11}\II _{n\times n}$. Hence $\Nm _k(M)_q= \{km_{11}\}$ for
all $k\in \FF _q$. We have $0\in \Nm '_0(N)$, because the equation $x_1^2+x_2^2 =0$ has a non-trivial solution, e.g. $(e,1)$ with $e^2=-1$.
Now assume that $M$ is not as in (i). Hence either there are $i<j$ with $m_{ij}+m_{ji} \ne 0$ or there is $i>1$ with $m_{ii} \ne m_{11}$. In the former (resp. latter) case
we use part (iii1) (resp. (iii2)) of Proposition \ref{qq3}.
\end{proof}

\begin{proposition}\label{qq4}
Assume $n \ge 2$ and $q$ even and fix an $n\times n$-matrix $M= (m_{ij})$ with coefficients in $\FF _q$.

\quad (a) We have $\Nm '_0(M)_q \ne \emptyset$ and either $0\in \Nm '_0(M)$ or $\Nm _0(M)\supseteq \FF _q^\ast$.

\quad (b) We have $\Nm '_0(M)_q =\{0\}$ if and only if $m_{ii} +m_{ij} +m_{ji} +m_{jj} =0$ for all $i<j$.

\quad ({c}) Assume $\Nm '_0(M)_q\ne \{0\}$. If $n=2$, (resp. $n=3$, resp. $n\ge 4$), then $\Nm '_0(M)_q= \FF _q^\ast$ (resp. $\Nm '_0(M)_q \supseteq \FF _q^\ast$, resp. $\Nm '_0(M)_q =\FF _q$).
\end{proposition}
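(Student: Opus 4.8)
The plan is to reduce everything to the case $q$ even with $n=2$ and then bootstrap to larger $n$ using the structure of the quadratic form. First I would invoke Remark~\ref{qq1} to replace $M$ by its upper-triangular representative $N$ with $n_{ii}=m_{ii}$ and $n_{ij}=m_{ij}+m_{ji}$ for $i<j$, since this preserves both $\Nm_k(M)_q$ and $\Nm'_0(M)_q$; the quantities $m_{ii}+m_{ij}+m_{ji}+m_{jj}$ governing the statement are exactly the values $\nu'_M(u)$ on the ``diagonal'' vectors $u=e_i+e_j$. The key reformulation (valid since $q$ is even) is that condition~(\ref{eqq1}) with $k=0$ is equivalent to~(\ref{eqq3}) with $c=0$, i.e. $\sum_{i=1}^n x_i=0$. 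So I want to understand the image of the restriction of the quadratic form $f(x)=\sum_{i,j}m_{ij}x_ix_j$ to the hyperplane $H:=\{\sum x_i=0\}\subseteq\FF_q^n$, excluding the origin.

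For part~(a), the nonemptiness $\Nm'_0(M)_q\ne\emptyset$ is immediate because $H\setminus\{0\}\ne\emptyset$ for $n\ge2$ (e.g. $e_1+e_2\in\Bb_n$). The dichotomy ``either $0\in\Nm'_0(M)$ or $\Nm_0(M)\supseteq\FF_q^\ast$'' I would extract from the fact that $\nu'_M$ restricted to $H$ is a quadratic form, so its image is a coset-like structure: if the form is not identically zero on $H\setminus\{0\}$ and never vanishes there, I would show its image already fills $\FF_q^\ast$ by a scaling argument, using that over $\FF_q$ with $q$ even every element is a unique square (Remark~\ref{0ab1}) and that replacing $u$ by $du$ scales $\nu'_M(u)$ by $d^2$, hence by an arbitrary element of $\FF_q^\ast$. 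For part~(b), the equivalence $\Nm'_0(M)_q=\{0\}\iff m_{ii}+m_{ij}+m_{ji}+m_{jj}=0$ for all $i<j$ is where I would do the real work: the forward direction follows by evaluating on the vectors $e_i+e_j\in\Bb_n$, which gives $\nu'_M(e_i+e_j)=m_{ii}+m_{ij}+m_{ji}+m_{jj}$, so if some such sum is nonzero then $\Nm'_0(M)_q\ne\{0\}$. The converse requires showing these diagonal conditions force $f\equiv0$ on all of $H$; I would argue that the vectors $e_i+e_j$ (for $i<j$) span $H$ and that the vanishing of the associated quadratic form on a spanning set of ``differences,'' together with the characteristic-$2$ identity $f(x+y)=f(x)+f(y)+B(x,y)$ where $B$ is the (alternating, since $q$ even) polar form, propagates vanishing to all of $H$.

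For part~(c), I would analyze the three size regimes separately. The case $n=2$ is essentially Proposition~\ref{qq3}(ii): when $m_{11}+m_{12}+m_{21}+m_{22}\ne0$ we get $\Nm'_0(M)_q=\FF_q^\ast$ exactly (note $0\notin\Nm'_0$ there since $a=0$ forces $x_1=x_2=0$). For $n\ge4$ I would use Corollary~\ref{0abq3}, which already gives $0\in\Nm'_0(M)_q$; combined with the part~(a) dichotomy and the scaling argument, this upgrades $\Nm'_0(M)_q\supseteq\FF_q^\ast$ to the full equality $\Nm'_0(M)_q=\FF_q$. The case $n=3$ is the delicate intermediate one: here I expect $0$ may or may not lie in $\Nm'_0(M)_q$ (the homogeneous form in $n-1=2$ variables after substituting $x_3=x_1+x_2$ need not have a nontrivial zero), so I can only claim $\Nm'_0(M)_q\supseteq\FF_q^\ast$, obtained by restricting to a suitable coordinate-plane copy of the $n=2$ situation and then scaling.

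The main obstacle will be part~(b)'s converse and the $n=3$ analysis in part~(c). For~(b), proving that the diagonal vanishing conditions force $\nu'_M\equiv0$ on all of $H$ (not just on the spanning vectors $e_i+e_j$) requires care with the characteristic-$2$ polarization: I must check that the bilinear cross terms $B(e_i+e_j,\,e_k+e_l)$ also vanish under the hypotheses, which should follow since in characteristic $2$ the polar form of $f$ has matrix entries $m_{ij}+m_{ji}$ off the diagonal and these are precisely controlled by the assumed sums. For $n=3$, the obstruction to $0\in\Nm'_0(M)_q$ is genuine and tied to whether a binary quadratic form over $\FF_q$ represents zero nontrivially, so I must be content with the one-sided inclusion $\supseteq\FF_q^\ast$ rather than equality, exactly as the statement asserts.
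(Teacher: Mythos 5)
Your proposal is correct, and it follows the paper's skeleton (reduction via $\sum_i x_i=0$ in characteristic $2$, evaluation at the vectors $e_i+e_j$, the $n=2$ case via part (ii) of Proposition \ref{qq3}, the $n\ge 4$ case via Corollary \ref{0abq3}, and only the one-sided inclusion for $n=3$), but two of your component arguments are genuinely different from the paper's. First, for the dichotomy in (a) and for filling $\FF _q^\ast$ in (c), the paper repeatedly reduces to the $2\times 2$ computation, where the values are $(m_{ii}+m_{ij}+m_{ji}+m_{jj})x^2$; you instead use the scaling $\nu '_M(du)=d^{q+1}\nu '_M(u)=d^2\nu '_M(u)$ for $d\in \FF _q$ together with the bijectivity of squaring in even characteristic, so that a \emph{single} nonzero value of $\nu '_M$ on a null vector fills all of $\FF _q^\ast$, uniformly in $n$ --- this is slightly slicker and removes the need to route everything through the $2\times 2$ case (your clause ``and never vanishes there'' is superfluous: not identically zero is all you use). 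Second, for the converse in (b) the paper substitutes $x_n=x_1+\cdots +x_{n-1}$ into (\ref{eqq2}) and checks that every coefficient of the resulting polynomial vanishes; you instead argue via the polarization identity $f(x+y)=f(x)+f(y)+B(x,y)$ on the hyperplane $H$ spanned by the $e_i+e_j$, and the check you flagged does go through: under the hypothesis $m_{ij}+m_{ji}=m_{ii}+m_{jj}$ one gets $B(x,y)=\left(\sum_i m_{ii}x_i\right)\left(\sum_j y_j\right)+\left(\sum_i x_i\right)\left(\sum_j m_{jj}y_j\right)$, which vanishes identically on $H\times H$ (and $B(e_i,e_i)=2m_{ii}=0$ handles coinciding indices), so $f$ vanishes on all of $H$, exactly as the paper's coefficient computation shows. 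In short: the paper's version is a self-contained elementary calculation, while yours is coordinate-free and makes transparent \emph{why} the diagonal conditions propagate; both are complete, and your $n=3$ discussion correctly identifies that $0\in \Nm '_0(M)_q$ can fail there, which is why only $\Nm '_0(M)_q\supseteq \FF _q^\ast$ is claimed.
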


\begin{proof}
Part (a) follows from the case $n=2$, which is true by part (ii) of Proposition \ref{qq3}. 

The ``~only if~'' part of part (b) follows from part (a) and the case $n=2$, which is true by part (ii) of Proposition \ref{qq3}.

Now assume $n\ge 3$ and $m_{ii} +m_{ij} +m_{ji} +m_{jj} =0$ for all $i<j$. Take $u =\sum _{i=1}^{n} x_ie_i$, $x_i\in \FF _q$. For $i=1,\dots ,n$
the coefficient of $x_i^2$ in $\langle u,Mu\rangle$ is $m_{ii}$. If $1\le i <j \le n$ the coefficient of $x_ix_j$ in $\langle u,Mu\rangle$ is $m_{ij}+m_{ji}$.
Now assume $\langle u,u\rangle =0$, i.e.
$x_n=x_1+\cdots +x_{n-1}$. Note that $x_n^2= x_1^2+\cdots +x_{n-1}^2$. Fix $i\in \{1,\dots ,n-1\}$. After this substitution the coefficient of $x_i^2$ in $\langle u,Mu\rangle$ is $m_{ii}+m_{nn} + m_{in}+m_{ni} =0$. Fix $1\le i<j\le n-1$. After the substitution $x_n=x_1+\cdots +x_{n-1}$ the coefficient of $x_ix_j$ in $\langle u,Mu\rangle$ is
$m_{ij}+m_{ji} +m_{ni} +m_{in} +m_{nj} +m_{jn}$. By assumption we have
$m_{ij} +m_{ji} =m_{ii}+m_{jj}$, $m_{ni}+m_{in} =m_{ii}+m_{nn}$ and $m_{nj} +m_{jn} = m_{jj}+m_{nn}$. Hence  $m_{ij}+m_{ji} +m_{ni} +m_{in} +m_{nj} +m_{jn} =2m_{ii}+2m_{jj}+2m_{nn} =0$. Part (a) gives $\Nm '_0(M)_q=\{0\}$.

The case $n=2$ of part ({c}) is true by part (ii) of Proposition \ref{qq3}. Part ({c}) for $n=3$ follows from part (a). Part ({c}) for $n\ge 4$ follows from part (a) and Corollary \ref{0abq3}.
\end{proof}

\begin{proposition}\label{qq2.1}
Fix $c\in \FF _q^\ast$ and set $M:= c\II_{n\times n}$. 

\quad (i) If $q$ is even, then $\Nm '_0(c\II _{n\times n} ) =\{0\}$ for all $n\ge 2$ and $\sharp({\nu '_M}^{-1}(0)) = q^{n-1}$. 

\quad (ii) Assume that $q$ is odd. We have  $\Nm '_0(c\II _{n\times n} ) =\{0\}$ if either $n\ge 3$ or $n=2$ and $q\equiv 1\pmod{4}$, while  $\Nm '_0(c\II _{n\times n} ) =\emptyset$ if $q\equiv -1\pmod{4}$. If $n =2s+1$ is odd, then $\sharp({\nu '_M}^{-1}(0)) =q^{2s}$. If $n=2s$ with either $s$ even or $q\equiv 1\pmod{4}$, then $\sharp({\nu '_M}^{-1}(0)) =q^{2s-1}+q^s-q^{s-1}$. If $n=2s$ with $s$ odd and $q\equiv -1\pmod{4}$, then $\sharp({\nu '_M}^{-1}(0)) = q^{2s-1}-q^s+q^{s-1}$.
\end{proposition}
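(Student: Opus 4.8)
The plan is to reduce the whole statement to a single point-count on the quadric $\langle u,u\rangle =0$ and then invoke the classical formula for the number of zeros of a nondegenerate quadratic form over a finite field. The decisive simplification is that $M=c\II _{n\times n}$ acts as a scalar: for every $u$ we have $\langle u,Mu\rangle = c\langle u,u\rangle$, which vanishes identically on $\Bb _n$. Hence $\nu '_M$ is the zero map, so ${\nu '_M}^{-1}(0) =\Bb _n$ and $\sharp ({\nu '_M}^{-1}(0)) =\sharp (\Bb _n)$, while $\Nm '_0(c\II _{n\times n})_q$ equals $\{0\}$ if $\Bb _n\supsetneq \{0\}$ and equals $\emptyset$ if $\Bb _n=\{0\}$. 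Thus everything is governed by the number $\sharp (\Bb _n)$ of $\FF _q$-solutions of (\ref{eqq1}) with $k=0$, that is, of $\sum _{i=1}^n x_i^2=0$.

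For part (i) I would use that in characteristic $2$ the Frobenius is additive, so $\sum _{i=1}^n x_i^2 =(\sum _{i=1}^n x_i)^2$; therefore $\Bb _n$ is the hyperplane $\sum _{i=1}^n x_i=0$ and $\sharp ({\nu '_M}^{-1}(0)) =\sharp (\Bb _n)=q^{n-1}$. Since $n\ge 2$ gives $q^{n-1}\ge q>1$, the set $\Bb _n$ contains nonzero vectors and $\Nm '_0(c\II _{n\times n})_q=\{0\}$.

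For part (ii) the form $\beta =\sum _{i=1}^n x_i^2$ is nondegenerate, and its type has already been recorded in the paragraph preceding the statement by means of \cite[Table 5.1]{h}: for $n=2s$ it is hyperbolic when $s$ is even or $q\equiv 1\pmod 4$, and elliptic when $s$ is odd and $q\equiv -1\pmod 4$. I would then quote the standard count for the zeros of a nondegenerate quadratic form over $\FF _q$ (the explicit formulas of \cite{ln}, or \cite{s}): for $n=2s+1$ the quadric has $q^{2s}$ points, while for $n=2s$ it has $q^{2s-1}+q^s-q^{s-1}$ points in the hyperbolic case and $q^{2s-1}-q^s+q^{s-1}$ points in the elliptic case. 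Substituting the type recalled above yields precisely the three cardinality formulas in the statement. It then remains to detect when $\Bb _n=\{0\}$: for $n=2s+1$ with $n\ge 2$ one has $q^{2s}\ge q^2>1$; in the hyperbolic case $q^{s-1}(q^s+q-1)>1$; and in the elliptic case $q^{s-1}(q^s-q+1)$ equals $1$ for $s=1$ and exceeds $1$ for $s\ge 2$. Hence $\Bb _n=\{0\}$ happens exactly for $n=2$ with $q\equiv -1\pmod 4$, so $\Nm '_0=\emptyset$ there and $\Nm '_0=\{0\}$ in all other cases, in particular for every $n\ge 3$ and for $n=2$ with $q\equiv 1\pmod 4$.

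I expect no serious obstacle: the substance is the vanishing observation for scalar matrices together with the classical quadric point-count, and everything else is bookkeeping across the parity of $n$ and the value of $q\bmod 4$. The one spot demanding care is the boundary instance $n=2$, $q\equiv -1\pmod 4$, where $x_1^2+x_2^2=0$ has only the trivial solution because $-1$ is a nonsquare in $\FF _q$; this is exactly the $s=1$ case of the elliptic formula, where the count collapses to $1$ and forces $\Nm '_0=\emptyset$ rather than $\{0\}$.
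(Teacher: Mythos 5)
Your proposal is correct and follows essentially the same route as the paper: the key observation that $\langle u,c\II _{n\times n}u\rangle =c\langle u,u\rangle$ makes $\nu '_M$ vanish identically on $\Bb _n$, the hyperplane description $\sum x_i=0$ in characteristic $2$, and the hyperbolic/elliptic classification of $\sum x_i^2$ for odd $q$. The only cosmetic difference is that you quote the affine zero-count formulas directly, whereas the paper passes through the projective quadric $Q$ via $\sharp ({\nu '_M}^{-1}(0))=1+(q-1)\sharp (Q)$ and Hirschfeld's tables; the two bookkeepings agree.
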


\begin{proof}
We obviously have $\langle u,c\II _{n\times n}u\rangle =0$ for any $u\in \FF _q$ with $\langle u,u\rangle =0$. Thus the only problem is
if there is $u\in \FF _q^n$, $u\ne 0$, with $\langle u,u\rangle =0$ and to compute the cardinality of the set of all such $u$. Write $u =\sum _i x_ie_i$ with $x_i\in \FF_q$. First assume that $q$ is even. In this case the
condition $\langle u,u\rangle =0$ is equivalent to (\ref{eqq3}) with $c=0$ and it has a non-trivial solution for all $n\ge 2$; moreover the set $\langle u,u\rangle =0$ is the hyperplane $x_1+\cdots +x_n=0$ of $\FF _q^n$ and hence it has cardinality $q^{n-1}$. Now assume that $q$ is odd. In this
case (\ref{eqq1}) with $k=0$ is the equation of a certain quadric hypersurface $Q \subset \PP^{n-1}(\FF _q)$ and $0\in \Nm '_0(c\II _{n\times n})$ if and only if $Q(\FF _q) \ne \emptyset$, while (since we are working in the vector space $\FF _q^n$, instead of the associated projective space)
$\sharp(\nu _M '^{-1}(0)) = 1 +(q-1)\sharp (Q)$. The quadric $Q$ has always full rank and hence $Q\ne \emptyset$ if $n-1\ge 2$. The integer $\sharp (Q)$ is
computed in \cite[Table 5.1 and Theorem 5.2.6]{h}.\end{proof}

\begin{proposition}\label{qq6}
Assume $q\equiv -1 \pmod{4}$ and $n\ge 3$. Then $\Nm '_0(M)\ne \emptyset$.
\end{proposition}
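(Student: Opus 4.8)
The plan is to reduce the statement to the existence of a single nonzero isotropic vector with coordinates in $\FF_q$. Since $M$ has coefficients in $\FF_q$ and this entire section concerns the sets $\Nm_k(\cdot)_q$, the quantity intended here is $\Nm'_0(M)_q$, and this set is nonempty as soon as $\Bb_n\setminus\{0\}=\{u\in\FF_q^n\setminus\{0\}\mid \langle u,u\rangle =0\}$ is nonempty: any such $u$ produces the element $\nu'_M(u)=\langle u,Mu\rangle\in\FF_q$, which by definition lies in $\Nm'_0(M)_q$. (The same witness, viewed in $\FF_{q^2}^n$, also shows $\Nm'_0(M)\neq\emptyset$.) Thus it suffices to exhibit a nonzero $u\in\FF_q^n$ solving (\ref{eqq1}) with $k=0$, i.e. $\sum_{i=1}^n x_i^2=0$.

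First I would note that $q\equiv -1\pmod 4$ forces $q$ odd, so Remark \ref{0ab1} applies: every element of $\FF_q$, and in particular $-1$, is a sum of two squares. Choose $x_1,x_2\in\FF_q$ with $x_1^2+x_2^2=-1$, and set $u:=x_1e_1+x_2e_2+e_3$, all remaining coordinates being $0$; here the hypothesis $n\geq 3$ enters, making the coordinate $e_3$ available. Then $u\neq 0$ because its third coordinate equals $1$, while $\langle u,u\rangle=x_1^2+x_2^2+1=0$. Hence $u\in\Bb_n\setminus\{0\}$, and $\langle u,Mu\rangle\in\Nm'_0(M)_q$, which proves the claim.

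I do not expect a genuine obstacle: the content is precisely the two-squares fact of Remark \ref{0ab1}, the only care being to keep the witness nonzero, which is guaranteed by taking the $e_3$-coordinate equal to $1$ (and this is exactly what needs $n\geq 3$). The hypothesis $q\equiv -1\pmod 4$ is in fact used only through parity; it is stated to complement part (i) of Proposition \ref{qq3}, which shows that for $q\equiv -1\pmod 4$ and $n=2$ one has $\Nm'_0(M)_q=\emptyset$, so that Proposition \ref{qq6} completes the picture for this congruence class. Alternatively, one could invoke the point count of the full-rank quadric $\sum_i x_i^2=0$ in $\PP^{n-1}(\FF_q)$ already used in Proposition \ref{qq2.1}: for $n-1\geq 2$ that quadric is nonempty over $\FF_q$, again yielding a nonzero isotropic $u$. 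I prefer the explicit two-squares construction since it is shorter and self-contained.
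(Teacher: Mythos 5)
Your proof is correct and takes essentially the same route as the paper: both reduce to exhibiting a nonzero solution of $x_1^2+x_2^2+x_3^2=0$ in $\FF_q^3$ (padding with zeros for $n>3$) and then feed it into $\langle u,Mu\rangle$, you justifying existence by the two-squares fact of Remark \ref{0ab1}, the paper by the point count $\sharp (Q)=q+1$ of the associated conic in $\PP^2(\FF_q)$ --- the alternative you yourself mention. Your reading of the statement as really asserting $\Nm'_0(M)_q\ne\emptyset$ is also the right one, since over $\FF_{q^2}$ the claim is trivial for all $n\ge 2$ by Remark \ref{aab1}.
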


\begin{proof}
It is sufficient to do the case $n=3$. Just use that $x_1^2+x_2^2+x_3^2=0$ has a solution $\ne (0,0,0)$ in $\FF _q^3$ (since $q$ is odd, it has exactly $q^2$ solutions
in $\FF _q^3$, because the associated conic $Q\subset \PP^2(\FF _q)$ has cardinality $q+1$).
\end{proof}

\begin{lemma}\label{00ab1}
For every $k\in \FF _q$, $q$ odd, and any $a_1\in \FF _q^\ast$, $a_2\in \FF _q^\ast$ there are $x_1,x_2\in \FF _q$ such that $a_1x_1^2+a_2x_2^2 =k$.
\end{lemma}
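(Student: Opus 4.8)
The plan is to use a pigeonhole counting argument on the values taken by the two squared terms. Since $q$ is odd, I first record that the squaring map $x\mapsto x^2$ on $\FF _q$ has image of cardinality $(q+1)/2$, consisting of the $(q-1)/2$ nonzero squares together with $0$. This follows from the fact that $\FF _q^\ast$ is cyclic of even order $q-1$ (Remark \ref{100}): exactly half of its elements are squares, and each nonzero square has precisely two square roots, so the $q$ field elements produce $(q-1)/2 + 1 = (q+1)/2$ distinct values of $x^2$.

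Next I would introduce the two subsets of $\FF _q$
\[
S_1 := \{a_1x_1^2\mid x_1\in \FF _q\}, \qquad S_2 := \{k-a_2x_2^2\mid x_2\in \FF _q\}.
\]
Because $a_1\ne 0$, multiplication by $a_1$ is a bijection of $\FF _q$, so $\sharp (S_1)$ equals the number of squares, namely $(q+1)/2$. Likewise, since $a_2\ne 0$ and translation by $k$ is a bijection, $\sharp (S_2) = (q+1)/2$.

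The key step is then the pigeonhole principle: we have $\sharp (S_1)+\sharp (S_2) = q+1 > q = \sharp (\FF _q)$, so $S_1$ and $S_2$ cannot be disjoint. Choosing any $z\in S_1\cap S_2$ yields $x_1,x_2\in \FF _q$ with $z = a_1x_1^2 = k-a_2x_2^2$, whence $a_1x_1^2+a_2x_2^2 = k$, as desired.

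There is essentially no obstacle here beyond setting up the count correctly. The one point to emphasize is that the argument works uniformly for every $k\in \FF _q$, including $k=0$, and for every pair of nonzero coefficients, with no case distinction on whether $a_1$, $a_2$, or $-a_1/a_2$ happen to be squares. This uniformity is precisely what makes the counting argument preferable to attempting a reduction to the sum-of-two-squares statement of Remark \ref{0ab1}, which only addresses the special coefficients $a_1=a_2=1$.
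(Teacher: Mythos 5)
Your proof is correct, but it takes a genuinely different route from the paper. The paper dispatches $k=0$ by taking $x_1=x_2=0$, and for $k\ne 0$ homogenizes the equation to $a_1x_1^2+a_2x_2^2-kx_3^2=0$, observes (via the partial derivatives, using $q$ odd and $a_1,a_2,k\ne 0$) that this defines a smooth conic $C\subset \PP^2(\FF _q)$, invokes $\sharp ({C})=q+1$ from \cite[Part (i) of Theorem 5.2.6]{h}, and notes that at most two points of $C$ lie on the line $x_3=0$, so some point dehomogenizes to an affine solution. You instead run the classical pigeonhole count: the value sets $S_1=\{a_1x_1^2\}$ and $S_2=\{k-a_2x_2^2\}$ each have cardinality $(q+1)/2$, and $(q+1)/2+(q+1)/2=q+1>q$ forces an intersection. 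Both arguments are uniform in $k$ and in the nonzero coefficients $a_1,a_2$. What each buys: your argument is self-contained and more elementary --- it needs only the cyclicity of $\FF _q^\ast$ (Remark \ref{100}) and no external point-count theorem, and it is essentially the standard proof behind the sum-of-two-squares fact the paper quotes in Remark \ref{0ab1}. The paper's conic argument costs a citation but yields more than bare existence: it gives the exact number $q+1$ of projective solutions, which is information of the kind the paper actually uses elsewhere (e.g.\ the solution counts in Propositions \ref{qq2.1} and \ref{qq6}), so the geometric framing is not wasted there even though, for this lemma alone, your counting proof is arguably the cleaner one.
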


\begin{proof}
If $k=0$, then take $x_1=x_2=0$. Now assume $k\ne 0$. The equation $a_1x_1^2+a_2x_2^2-kx_3^2 =0$ is the equation of a smooth conic $C\subset \PP^2(\FF _q)$, because
for odd $q$ and non-zero $a_1, a_2, k$ the partial derivatives of $a_1x_1^2+a_2x_2^2-kx_3^2$ have only $(0,0,0)$ as their common zero. We have $\sharp ({C}) =q+1$ (\cite[Part (i) of Theorem 5.2.6]{h})
and at most two of its points are contained in the line $L\subset \PP^2(\FF _q)$ with $x_3=0$ as its equation. If $(b_1:b_2:b_3)\in C\setminus C\cap L$,
then $b_3\ne 0$ and $a_1(b_1/b_3)^2+a_2(b_1/b_3)^2 =k$.
\end{proof}

The assumption ``~$q\equiv 1\pmod{4}$ if $n=2$~'' in the next result is necessary by part (i) of Proposition \ref{qq3}.

\begin{proposition}\label{00ab1+}
Assume $q$ odd. If $n=2$ assume $q\equiv 1\pmod{4}$. Let $M =(m_{ij})$ be an $n\times n$ matrix such that $m_{ij}+m_{ji} =0$ for all $i\ne j$, $m_{11}\ne m_{22}$
and $m_{ii} = m_{22}$ for all $i>2$. Then $\sharp (\Nm _0(M)_q) =(q+1)/2$ and $\Nm _0(M)_q\setminus \{0\}$ is the set of all $a\in \FF _q^\ast$
such that $-a/(m_{22}-m_{11})$ is a square. We have $0\in \Nm '_0(M)_q$ if and only if either $n\ge 4$ or $n=3$ and $q\equiv 1\pmod{4}$.
\end{proposition}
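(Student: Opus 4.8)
The plan is to reduce everything at once to a diagonal matrix and then to a single one-variable computation on the null-cone. By Remark~\ref{qq1}, since $m_{ij}+m_{ji}=0$ for all $i\neq j$, both $\Nm_0(M)_q$ and $\Nm'_0(M)_q$ coincide with those of the diagonal matrix $N$ having diagonal entries $n_{11}=m_{11}$ and $n_{ii}=m_{22}$ for $i\ge 2$. Writing $u=\sum_i x_ie_i$ with $x_i\in\FF_q$, equation~(\ref{eqq1}) with $k=0$ reads $\sum_i x_i^2=0$, so $\sum_{i\ge 2}x_i^2=-x_1^2$ and hence
\[
\langle u,Nu\rangle = m_{11}x_1^2+m_{22}\sum_{i\ge2}x_i^2=(m_{11}-m_{22})x_1^2.
\]
This identity is the engine of the whole argument: on the null-cone, $\langle u,Nu\rangle$ depends only on $x_1$.

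From this I read off both parts of the range description. If $a\in\Nm_0(M)_q$ then $a=(m_{11}-m_{22})x_1^2$, so $-a/(m_{22}-m_{11})=x_1^2$ is a square, giving one inclusion and the ``square'' criterion. For the reverse inclusion I would fix any $t\in\FF_q$ and realize the value $(m_{11}-m_{22})t^2$ by taking $x_1=t$ and solving $\sum_{i\ge2}x_i^2=-t^2$: when $n\ge3$ this is solvable because every element of $\FF_q$ is a sum of two squares (Remark~\ref{0ab1}), and when $n=2$ and $q\equiv1\pmod4$ one writes $-t^2=(et)^2$ with $e^2=-1$. Thus in all admissible cases $\Nm_0(M)_q=\{(m_{11}-m_{22})t^2 : t\in\FF_q\}$. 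Since $t\mapsto t^2$ attains $(q+1)/2$ distinct values on $\FF_q$ (the nonzero squares together with $0$) and multiplication by the nonzero constant $m_{11}-m_{22}$ is a bijection, $\sharp(\Nm_0(M)_q)=(q+1)/2$; and $\Nm_0(M)_q\setminus\{0\}$ is exactly $(m_{11}-m_{22})(\FF_q^\ast)^2$, i.e. the set of $a\in\FF_q^\ast$ for which $-a/(m_{22}-m_{11})$ is a (nonzero) square.

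For the claim about $0\in\Nm'_0(M)_q$ the same identity does the work. Since $m_{11}\neq m_{22}$, the condition $\langle u,Nu\rangle=0$ forces $x_1=0$; hence $0\in\Nm'_0(M)_q$ if and only if $\sum_{i\ge2}x_i^2=0$ admits a nonzero solution in the $n-1$ variables $x_2,\dots,x_n$. I would finish with a case analysis on $n$: for $n=2$ only the trivial solution exists, so $0\notin\Nm'_0(M)_q$; for $n=3$ a nonzero solution of $x_2^2+x_3^2=0$ exists precisely when $-1$ is a square, i.e. $q\equiv1\pmod4$; and for $n\ge4$ the diagonal form $\sum_{i\ge2}x_i^2$ is a non-degenerate quadratic form in at least three variables, hence isotropic over $\FF_q$ by Chevalley--Warning (exactly as used in Proposition~\ref{qq6}), so a nonzero solution always exists. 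This produces precisely the stated dichotomy.

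The reductions and the central computation are routine; the only step requiring genuine care is the last one, where one must correctly separate the three ranges of $n$ and invoke isotropy of a diagonal sum of squares. I do not expect a serious obstacle, but it is worth recording that for $n=2$ the hypothesis $q\equiv1\pmod4$ is exactly what is needed both for every $t\in\FF_q$ to be realizable (so that the count $(q+1)/2$ survives) and for the conclusion $0\notin\Nm'_0(M)_q$ to be consistent with part~(i) of Proposition~\ref{qq3}.
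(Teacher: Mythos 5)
Your proposal is correct and follows essentially the same route as the paper: reduction to a diagonal matrix via Remark~\ref{qq1}, the identity $\langle u,Nu\rangle=(m_{11}-m_{22})x_1^2$ on the null-cone (the paper obtains it by subtracting $m_{11}$ times (\ref{eqq1}) from (\ref{eqq2})), realization of all square values via sums of two squares, and the same $n=2$/$n=3$/$n\ge 4$ case split for $0\in\Nm'_0(M)_q$. The only cosmetic differences are that you treat $n=2$ directly rather than citing part (iii2) of Proposition~\ref{qq3}, and you invoke Chevalley--Warning for isotropy when $n\ge 4$ where the paper constructs an explicit solution with Lemma~\ref{00ab1}.
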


\begin{proof}
By Remark \ref{qq1} it is sufficient to do the case in which $M$ is a diagonal matrix. The case $n=2$ is true by part (iii2) of Proposition \ref{qq3}. Now assume $n\ge 3$. Taking the difference of (\ref{eqq2}) with (\ref{eqq1}) multiplied by $m_{11}$ we get
$(m_{22}-m_{11})(x_2^2+\cdots +x_n^2) =a$, while (\ref{eqq1}) gives $x_1^2 = -(x_2^2+\cdots +x_n^2)$. Thus if $-a/(m_{22}-m_{11})$ is not a square, then
$a\notin \Nm _0(M)_q$. If $-a/(m_{22}-m_{11})$ is a square, then we take $x_i=0$ for $i>3$, take $x_2$ and $x_3$ such that $(m_{22}-m_{11})(x_2^2+x_3^2) =a$
(Lemma \ref{00ab1+}) and then take $x_1$ with $x_1^2 = -a/(m_{22}-m_{11})$. Now take $a=0$. If $n\ge 4$ we take $x_1=0$, $x_j=0$ for all $j>4$
and find $(x_2,x_3,x_4) \in \FF _q^3\setminus \{(0,0,0)\}$ such that $x_2^2+x_2^2+x_3^2=0$ (take $x_3=1$ and use Lemma \ref{00ab1+} with $a_1=a_2=1$ and $k=-1$).
Now assume $a=0$ and $n=3$. We proved that we need to have $x_2^2+x_3^2 =0$ and hence we need to have $x_1=0$. There is $(x_2,x_3)\in \FF _q^2\setminus \{(0,0)\}$
with $x_2^2+x_3^2 =1$ if and only if $-1$ is a square in $\FF _q$, i.e. if and only if $q\equiv 1 \pmod{4}$.
\end{proof}

\begin{proposition}\label{00ab2}
Assume $q$ odd and $n\ge 3$. Let $M =(m_{ij})$ be an $n\times n$ matrix over $\FF _q$ such that $m_{ij}+m_{ji} =0$ for all $i\ne j$, and not all diagonal elements
are the same. Then $\sharp (\Nm _0(M) _q)\ge (q+1)/2$.
\end{proposition}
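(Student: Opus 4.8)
The plan is to reduce to a diagonal matrix and then exploit the homogeneity of the problem, so that the whole statement collapses to producing a single isotropic vector on which the form is nonzero. First I would use Remark \ref{qq1}: since $m_{ij}+m_{ji}=0$ for all $i\neq j$, the matrix $M$ has the same sets $\Nm_0(\cdot)_q$ as the diagonal matrix $D=\mathrm{diag}(d_1,\dots,d_n)$ with $d_i=m_{ii}$. Permuting the standard basis is a unitary change of coordinates and preserves $\Nm_0$, so, the $d_i$ not all being equal, I may assume $d_1\neq d_2$. For $u=\sum_i x_ie_i$ with $x_i\in\FF_q$ one has $\langle u,u\rangle=\sum_i x_i^2$ and $\langle u,Du\rangle=\sum_i d_ix_i^2$, so
\[
\Nm_0(D)_q=\Bigl\{\,\sum_i d_ix_i^2 \ \Big|\ x\in\FF_q^n,\ \sum_i x_i^2=0\,\Bigr\}.
\]

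The key observation is homogeneity. The cone $\{\sum_i x_i^2=0\}$ is stable under $x\mapsto\lambda x$ for $\lambda\in\FF_q^\ast$, and this multiplies $\sum_i d_ix_i^2$ by $\lambda^2$. Hence if some nonzero value $a$ is attained, then the whole coset $a\cdot(\FF_q^\ast)^2$ is contained in $\Nm_0(D)_q$. Since $q$ is odd, $(\FF_q^\ast)^2$ is the subgroup of squares, of index $2$ and order $(q-1)/2$ (Remark \ref{100}); as $a\neq 0$ and $0\in\Nm_0(D)_q$ always, this already gives $\sharp(\Nm_0(D)_q)\ge 1+(q-1)/2=(q+1)/2$. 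Thus the proposition reduces to the single claim that there exists $u\in\FF_q^n$ with $\langle u,u\rangle=0$ and $\langle u,Du\rangle\neq 0$.

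To produce such a $u$ I would split on $q\bmod 4$. If $q\equiv 1\pmod 4$, pick $e\in\FF_q^\ast$ with $e^2=-1$ and take $u=e_1+ee_2$: then $\sum_i x_i^2=1+e^2=0$ while $\sum_i d_ix_i^2=d_1-d_2\neq 0$. If $q\equiv -1\pmod 4$ I use the third coordinate (available since $n\ge 3$) and pass to $\PP^2(\FF_q)$. Subtracting $d_1\sum_i x_i^2$, the condition becomes: find a point of the conic $C\colon x_1^2+x_2^2+x_3^2=0$ at which $R:=(d_2-d_1)x_2^2+(d_3-d_1)x_3^2$ is nonzero. Here $C$ is smooth and nonempty, since $-1$ is a sum of two squares (Remark \ref{0ab1}) gives a rational point, so $C$ has exactly $q+1$ points. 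Because $d_2-d_1\neq 0$, the zero locus of $R$ is cut out by a quadratic form not proportional to $x_1^2+x_2^2+x_3^2$ (it has no $x_1^2$ term), so by B\'ezout it meets $C$ in at most four points; any remaining point of $C$ lifts to the desired isotropic vector.

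I expect the nonvanishing step in the case $q\equiv -1\pmod 4$ to be the main obstacle, since it is exactly here that the two-variable construction of the previous case fails and one must genuinely use $n\ge 3$ together with a count on the conic. The B\'ezout bound settles it as soon as $q+1>4$; the small residual value $q=3$ lies outside this range and would have to be inspected by hand.
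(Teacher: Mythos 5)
Your reduction and homogeneity argument are correct and, for $q\ge 5$, your proof is complete and genuinely different from (and cleaner than) the paper's: the paper splits into the case of exactly two distinct diagonal values (quoting the $n=3$ case of Proposition \ref{00ab1+}) and the case of three distinct values (an explicit substitution using Lemma \ref{00ab1}), whereas your observation that the set of attained nonzero values is a union of cosets of $(\FF_q^\ast)^2$ — so that a single nonzero isotropic vector $u$ with $\langle u,Mu\rangle\neq 0$ already yields $1+(q-1)/2=(q+1)/2$ values — collapses everything to one existence statement, settled for $q\equiv 1\pmod 4$ by $u=e_1+ee_2$ and for $q\equiv 3\pmod 4$, $q\ge 7$ by the count $\sharp(C)=q+1>4\ge \sharp(C\cap\{R=0\})$.

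However, the case $q=3$ that you leave ``to be inspected by hand'' is a genuine gap, and the inspection refutes rather than completes the argument: the proposition as stated is \emph{false} for $q=3$, $n=3$. Take $M=\mathrm{diag}(0,1,2)$ over $\FF_3$. Since $x^2\in\{0,1\}$ for $x\in\FF_3$, the equation $x_1^2+x_2^2+x_3^2=0$ forces either $u=0$ or all $x_i\neq 0$; in the latter case $x_i^2=1$ for every $i$, so $\langle u,Mu\rangle=0+1+2=0$. Hence $\Nm_0(M)_3=\{0\}$, of cardinality $1<2=(q+1)/2$. This is not an isolated accident: any three distinct elements of $\FF_3$ sum to $0$, so your Bézout bound failing exactly at $q+1=4$ is the true obstruction — all four points of $C$ lie on $\{R=0\}$ whenever $d_1+d_2+d_3=0$. (For $n\ge 4$ at $q=3$ the statement survives: pigeonhole forces a repeated diagonal entry $d_i=d_j\neq d_k$, and the vector with coordinates $1$ at positions $i,j,k$ and $0$ elsewhere gives the value $d_k+2d_i=d_k-d_i\neq 0$, so your one-vector criterion applies.) Note that the paper's own proof has the same blind spot: in the three-distinct-values case it sets $x_1=1$, uses Lemma \ref{00ab1} to produce one point of the conic $x_2^2+x_3^2=-1$, and then reads off every $a$ for which a certain ratio is a square as attained — implicitly letting $x_2^2$ range over all squares, which the constraint $x_3^2=-1-x_2^2$ does not permit (at $q=3$ the only option is $x_2^2=x_3^2=1$). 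So your argument, made precise, proves the proposition for all odd $q\ge 5$ and simultaneously pinpoints the correction the statement needs at $q=3$.
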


\begin{proof}
By Remark \ref{qq1} it is sufficient to the case in which $M$ is a diagonal matrix. If the diagonal entries of $M$ have only two different values, they we may rearrange
them so that they are $m_{11}$ and $m_{22}$ occurring at least twice. In this case we may apply the case $n=3$ of Proposition \ref{00ab1}.
Hence we may assume that $m_{11}$, $m_{22}$ and $m_{33}$ are different. It is sufficient to the case $n=3$. Take $x_1=1$ and then take $x_2,x_3$ such
that $x_3^2 -x_2^2 =-1$ (Lemma \ref{00ab1+}).
From (\ref{eqq2}) we get $(m_{33}+m_{22})x_2^2 = a-m_{11} +m_{33}$. If $m_{33}+m_{22}\ne 0$, then we get that all $a\in \FF _q$ such that $(a-m_{11})/(m_{33}+m_{22})$ are
squares in $\FF _q$ (and there are $(q+1)/2$ such elements, because products of squares are squares and a product of a non-zero square and a non-square is not a square) are contained in $\Nm _0(M)_q$. Similarly, we conclude if either $m_{11}+m_{22}\ne 0$ or $m_{11}+m_{33}\ne 0$. If $m_{11}+m_{22}=m_{11}+m_{33} =m_{22}+m_{33}=0$,
then $m_{11} =m_{22} =m_{33}=0$, because $q$ is odd. 
\end{proof}

\begin{lemma}\label{00ab4}
Let $r$ be a prime power. Let $f\in \FF _r[t_1,t_2]$ be a polynomial of degree at most $2$ with $f$ not a constant. Then $f$ assumes at least $\lceil  r/2\rceil$ values over $\FF _r$. 
\end{lemma}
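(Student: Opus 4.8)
The plan is to prove Lemma~\ref{00ab4} by a counting argument on the fibers of the evaluation map $\FF_r^2 \to \FF_r$, $(\alpha,\beta)\mapsto f(\alpha,\beta)$, distinguishing whether $f$ is linear or genuinely quadratic. First I would dispose of the case $\deg(f)=1$: here the level sets $f=c$ are affine lines (or the empty set for at most one value in degenerate situations, but since $f$ is a nonconstant linear polynomial in two variables each fiber is a full line), so the map is surjective onto $\FF_r$ and $f$ assumes all $r\ge \lceil r/2\rceil$ values.

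The substantive case is $\deg(f)=2$. Here the strategy is to bound the size of each fiber $f^{-1}(c) = \{(\alpha,\beta)\in\FF_r^2 \mid f(\alpha,\beta)=c\}$ and then count. The key observation I would use is that for fixed $c$, the equation $f(t_1,t_2)=c$ defines a plane affine conic, and I want to show each such conic has few $\FF_r$-rational points relative to the total $r^2$. Rather than invoke the full classification of conics, the cleaner route for this two-variable setting is to bound fibers directly: write $f(t_1,t_2)=c$ and view it, for a generic fixed value of one variable, as a degree $\le 2$ equation in the other, giving at most two solutions per slice. This only yields a bound of $2r$ per fiber, which is too weak. Instead I would argue through the image directly, as in the $q$-even discussion in part~(a) of Section~\ref{Sq}: the real content is that the map has controlled fiber sizes in an averaged sense.

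The cleanest approach, and the one I would actually carry out, is to reduce to a one-variable statement. Since $f$ has degree exactly $2$ and is nonconstant, I would complete the square or perform a linear change of coordinates over $\FF_r$ (permissible since only the \emph{number} of attained values matters, and a bijective affine substitution of $(t_1,t_2)$ does not change the image of $f$) to bring $f$ into a normal form depending on whether $r$ is even or odd and on the rank of the quadratic part. In each normal form the image of $f$ is governed by the image of a single-variable degree $\le 2$ polynomial $h\colon \FF_r \to \FF_r$ (the other variable either ranging freely, which only enlarges the image, or being absorbed). For such an $h$ every fiber $h^{-1}(c)$ has cardinality at most $2$ (two roots of $h(t)-c$), so $\sharp(h(\FF_r)) \ge r/2$, hence $\ge \lceil r/2\rceil$ since the left side is an integer. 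I expect the main obstacle to be the bookkeeping of normal forms: handling the even characteristic case (where completing the square fails and one must instead use the additive structure, the rank of the quadratic form, and the observation from Remark~\ref{0ab1} that squaring is a bijection when $r$ is even) separately from the odd case, while making sure that in every degenerate subcase the reduction to a nonconstant one-variable polynomial of degree $\le 2$ genuinely holds and that the free variable never shrinks the image below $\lceil r/2 \rceil$.
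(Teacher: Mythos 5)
Your decisive misstep is in the middle of the proposal, where you dismiss the bound of $2r$ points per fiber as ``too weak'': it is exactly sufficient, and it is the paper's entire proof. For each $c\in \FF_r$ the polynomial $f-c$ is nonconstant of degree at most $2$, so its zero set is an affine conic and has at most $2r$ points of $\FF_r^2$ (if it has a linear factor the fiber lies on at most two lines; if it is an irreducible conic it has at most $r+1$ points; uniformly, a nonzero polynomial of total degree $d$ in two variables has at most $dr$ zeros). Since the $r^2$ points of $\FF_r^2$ are partitioned among the nonempty fibers, the number of attained values is at least $r^2/(2r)=r/2$, and being an integer it is at least $\lceil r/2\rceil$. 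You appear to have compared the fiber bound against a single slice rather than averaging over all $r^2$ inputs; that averaging closes the argument in one line, with no case distinction between $\deg(f)=1$ and $\deg(f)=2$ and no dependence on the parity of $r$. (One small point in your favor: your worry about slices where $f(\alpha,t_2)-c$ degenerates to the zero polynomial is legitimate, which is why the clean statement is the global $2r$ bound for plane curves of degree at most $2$, not a slice-by-slice count.)

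The substitute route you propose --- an invertible affine change of variables to normal forms, reducing the image of $f$ to the image of a one-variable polynomial $h$ of degree at most $2$, whose fibers have size at most $2$ --- can indeed be pushed through, but as written it is a plan rather than a proof: the deferred ``bookkeeping'' is the entire content. You would need to split by the rank of the quadratic part; in the rank-one case split again according to whether the linear part involves the complementary variable (giving surjectivity) or not (giving a genuine one-variable $h$); and in even characteristic treat separately the case where the quadratic part is a square of a linear form and the nondegenerate hyperbolic/elliptic cases, where the linear terms must be absorbed by a translation using the associated bilinear form. None of these steps is carried out, and every one of them is subsumed, uniformly in the characteristic, by the fiber count you discarded.
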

\begin{proof}
Let $\phi : \FF _r^2\to \FF _r$ be the map induced by $f$. Since $\deg (f) \le 2 $ and $f$ is not constant, for each $a\in \FF _r$, $\phi ^{-1}(a)$ is an affine conic
and in particular
$\sharp (\phi ^{-1}(a)) \le 2r$. Hence $\sharp (\phi (\FF _r^2))\ge \lceil  r/2\rfloor$. 
\end{proof}

\begin{proposition}\label{00ab3}
Assume $q$ odd and $n\ge 3$. Let $M =(m_{ij})$ be an $n\times n$ matrix over $\FF _q$ such that there is $i\in \{1,\dots ,n\}$
with $m_{ij}+m_{ji} =0$ for all at least $2$ indices $j\ne i$ (say $j_1$ and $j_2$) and either $m_{j_1j_1}\ne m_{ii}$ or $m_{j_2j_2}\ne m_{ii}$ or $m_{j_1j_2}+m_{j_2j_1}\ne 0$.
Then $\sharp (\Nm _k(M)_q)) \ge (q+1)/2$ for all $k\in \FF _q$.
\end{proposition}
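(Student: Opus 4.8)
The plan is to collapse the problem to two variables and count the values of a single binary quadratic form on the projection of a sphere, then to run a line/pigeonhole engine, with the genuinely delicate case isolated at the end.

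\textbf{Reduction.} By Remark \ref{qq1} I may replace $M$ by a symmetric matrix with the same diagonal and the same off-diagonal sums, and since I only want a lower bound on $\sharp(\Nm_k(M)_q)$ I may set $x_\ell=0$ for all $\ell\notin\{i,j_1,j_2\}$; this can only shrink the set of attained values. Writing $z=x_i,\ x=x_{j_1},\ y=x_{j_2}$, the constraint (\ref{eqq1}) becomes $z^2+x^2+y^2=k$, and because $m_{ij_1}+m_{j_1i}=m_{ij_2}+m_{j_2i}=0$ the cross terms involving $i$ vanish, so subtracting $m_{ii}$ times the constraint gives $\langle u,Mu\rangle=m_{ii}k+Q(x,y)$, where $Q(x,y)=(m_{j_1j_1}-m_{ii})x^2+(m_{j_1j_2}+m_{j_2j_1})xy+(m_{j_2j_2}-m_{ii})y^2$ is a \emph{nonzero} binary form (this is exactly the hypothesis). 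Since $z$ exists iff $k-x^2-y^2$ is a square, the attained values are $m_{ii}k+Q(W)$ with $W=\{(x,y):k-x^2-y^2\text{ is a square in }\FF_q\}$. Hence it suffices to prove $\sharp(Q(W))\ge(q+1)/2$, and by Remark \ref{0ab2} I may restrict $k$ to three representatives (a square, a non-square, and $0$).

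\textbf{The workhorse and the easy classes.} The basic tool is the one-variable analogue of Lemma \ref{00ab4}: a nonconstant polynomial of degree $\le2$ in one variable has fibres of size $\le2$, so it takes at least $\lceil q/2\rceil=(q+1)/2$ values. Thus it is enough to find a full affine line $\ell\subset W$ on which $Q$ restricts to a nonconstant polynomial of degree $\le2$. Writing $\ell=\{P+tv\}$, the line lies in $W$ iff $g(t):=k-(P_1+tv_1)^2-(P_2+tv_2)^2$ is a square for every $t$; as $\deg g\le2$ this forces $g=s^2(t-t_0)^2$, which by Lagrange's identity happens exactly when $-(v_1^2+v_2^2)$ is a nonzero square and $(P_1v_2-P_2v_1)^2=k(v_1^2+v_2^2)$. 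The last equation is solvable precisely when $-k$ is a square: one picks $v$ with $v_1^2+v_2^2$ in the appropriate square class (possible by Lemma \ref{00ab1} with $a_1=a_2=1$) and then a suitable $P$, arranging $Q|_\ell$ nonconstant since $Q\ne0$. This settles every $k$ with $-k$ a square; in particular for $k=0$ one takes $\ell$ through the origin along an isotropic direction $v$ with $Q(v)\ne0$, recovering the mechanism behind Proposition \ref{00ab2}.

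\textbf{The hard class and the main obstacle.} When $-k$ is a non-square the set $W$ contains no full line, so the count must be carried out genuinely in two dimensions, and I would split on the rank of $Q$. If $Q$ is nondegenerate it represents every nonzero element of $\FF_q$, and I expect $\sharp(Q(W))$ to be close to $q$; I would prove the bound by counting the points of the intersection of the two quadrics $\{x^2+y^2+z^2=k\}$ and $\{Q(x,y)=b\}$ via the exact quadric counts of \cite[Table 5.1 and Theorem 5.2.6]{h}, or equivalently by a second-moment estimate $\sharp(Q(W))\ge(\sum_bN(b))^2/\sum_bN(b)^2$ with $N(b)=\sharp\{(x,y,z)\in S_k:Q(x,y)=b\}$. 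The step I expect to be the real obstacle is the remaining case $\mathrm{rank}(Q)=1$, say $Q=cL^2$: here $Q$ assumes only the $(q+1)/2$ values $c\cdot(\text{squares})$, so \emph{no} value may be lost, yet $Q$ vanishes only along $\{L=0\}$, a line whose direction is isotropic and which meets $W$ only when $k$ is a square. Thus for $-k$ a non-square the value $0$ (that is, $m_{ii}k$) can fail to be represented and the line/pigeonhole argument delivers only $(q-1)/2$ values; upgrading this extremal configuration to the full $(q+1)/2$ is the crux, and it is precisely here that one needs an argument going beyond restriction to a single line or circle.
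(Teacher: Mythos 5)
Your reduction coincides with the paper's own proof of Proposition \ref{00ab3}: restrict to the coordinates $i,j_1,j_2$, normalize by Remark \ref{qq1}, and subtract $m_{ii}$ times (\ref{eqq1}) from (\ref{eqq2}) to get the binary form $Q(x,y)=(m_{j_1j_1}-m_{ii})x^2+(m_{j_1j_2}+m_{j_2j_1})xy+(m_{j_2j_2}-m_{ii})y^2$. At the next step the two arguments part ways: the paper simply says ``Apply Lemma \ref{00ab4}'', i.e.\ it counts the values of $Q$ on all of $\FF_q^2$, whereas you correctly insist that only points of $W=\{(x,y): k-x^2-y^2 \text{ is a square}\}$ are admissible, since otherwise the coordinate $x_i$ does not exist in $\FF_q$. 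As a proof, though, your proposal is incomplete by your own admission: when $-k$ is a nonsquare you offer only a plan for $\operatorname{rank}(Q)=2$ (quadric counts or a second-moment bound, neither carried out), and for $\operatorname{rank}(Q)=1$ you stop at the obstacle.

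The decisive point is that the obstacle you isolated is not a defect of your method: it is a genuine counterexample to the proposition as stated, and it shows that the paper's proof fails exactly at the step you refused to take (applying Lemma \ref{00ab4} over $\FF_q^2$ rather than over $W$). Take $q=5$, $n=3$, $i=3$, $m_{11}=4$, $m_{12}=1$, $m_{21}=0$, $m_{22}=1$, and all other entries $0$; the hypotheses hold (e.g.\ $m_{11}\neq m_{33}$), and $Q(x,y)=4x^2+xy+y^2=(2x-y)^2$ has rank one with isotropic kernel $y=2x$ (since $1^2+2^2=0$ in $\FF_5$). For $k=2$ (a nonsquare, and $q\equiv 1\pmod{4}$): on the kernel line $x^2+y^2=5x^2=0$, so $x_3^2=2$ has no solution and the value $0$ is never attained; every $(x,y)$ with $x^2+y^2\in\{1,2,3\}$ does extend, and direct enumeration gives $\Nm_2(M)_q=\{1,4\}$, of cardinality $2=(q-1)/2<(q+1)/2=3$. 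So in your ``hard class'' ($q\equiv 1\pmod 4$, $Q$ a scalar times the square of a linear form vanishing on an isotropic direction, $k$ a nonsquare) the true count is exactly $(q-1)/2$, no argument can reach $(q+1)/2$, and the proposition must be corrected --- weakened to $\sharp(\Nm_k(M)_q)\ge (q-1)/2$, or the degenerate configuration excluded; outside that class your line-in-$W$ mechanism (for $-k$ a square, including $k=0$) does deliver the stated bound. Your decision to track the solvability set $W$ is precisely what the published proof omits.
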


\begin{proof}
We reduce to the case $n=3$ and $m_{32}+m_{23} =m_{31}+m_{13} =0$ and either $m_{11} \ne m_{33}$ and $m_{22}\ne m_{33}$ or $m_{12}+m_{21} \ne 0$. By Remark \ref{qq1} we may assume that $m_{32}=m_{23} =m_{31}=m_{13} =0$. Taking the difference
between (\ref{eqq2}) and $m_{33}$ times (\ref{eqq2}) we get
$$(m_{11}-m_{33})x_1^2+(m_{12}+m_{21})x_1x_2 + (m_{22} -m_{33})x_2^2 = a -km_{33} .$$Apply Lemma \ref{00ab4}.
\end{proof}

\providecommand{\bysame}{\leavevmode\hbox to3em{\hrulefill}\thinspace}

\end{document}